\documentclass[12pt]{amsart}

\setlength{\parskip}{6pt}
\setlength{\parindent}{0pt}

\usepackage{tgpagella}
\usepackage{euler}
\usepackage[T1]{fontenc}
\usepackage{amsmath, amssymb}
\usepackage[hidelinks]{hyperref}
\usepackage[english]{babel}
\usepackage{mathrsfs}
\usepackage{eucal}
\usepackage[all]{xy}
\usepackage{tikz}

\newtheorem{thm}{Theorem}[section]
\newtheorem*{thm*}{Theorem}
\newtheorem{lem}[thm]{Lemma}
\newtheorem{fact}[thm]{Fact}
\newtheorem*{prob*}{Problem}

\newtheorem{prop}[thm]{Proposition}
\newtheorem*{prop*}{Proposition}

\newtheorem{cor}[thm]{Corollary}
\newtheorem*{cor*}{Corollary}

\theoremstyle{definition}
\newtheorem{defn}[thm]{Definition}
\newtheorem*{defn*}{Definition}

\newtheorem{remark}[thm]{Remark}

\newtheorem*{question*}{Question}
\newtheorem*{Pquestion*}{Popa's question}

\newtheorem*{conv*}{Convention}
\newtheorem*{thm1*}{Theorem 1}
\newtheorem*{thm2*}{Theorem 2}
\newtheorem*{thmA*}{Theorem A}
\newtheorem*{thmB*}{Theorem B}
\newtheorem*{thmC*}{Theorem C}
\newtheorem*{thmD*}{Theorem D}
\newtheorem*{conj*}{Conjecture}
\newtheorem*{fact*}{Fact}

\newcommand{\dminus}{ 
\buildrel\textstyle\ .\over{\hbox{ 
\vrule height3pt depth0pt width0pt}{\smash-} 
}}

\def\u{\mathsf 1}

\def \tp{\operatorname{tp}}
\def \depth{\operatorname{depth}}

\makeatletter

\def\dotminussym#1#2{%
  \setbox0=\hbox{$\m@th#1-$}%
  \kern.5\wd0%
  \hbox to 0pt{\hss\hbox{$\m@th#1-$}\hss}%
  \raise.6\ht0\hbox to 0pt{\hss$\m@th#1.$\hss}%
  \kern.5\wd0}
\newcommand{\dotminus}{\mathbin{\mathpalette\dotminussym{}}}

\DeclareMathOperator{\tr}{tr}

\def \Th{\operatorname{Th}}
\def \R{\mathcal R}
\def \u{\mathcal U}

%Scott's commands

\textwidth 5.75in
\oddsidemargin 0.375in
\evensidemargin 0.375in

%%%%%%%%%%%%%%%%%%%%%%%%%%%%%%%%%%%%%%%%%%%%%%

\begin{document}

%%%%%%%%%%%%%%%%%%%%%%%%%%%%%%%%%%%%%%%%%%%%%%

\title{Properties expressible in small fragments of the theory of the hyperfinite II$_1$ factor}
\author{Isaac Goldbring and Bradd Hart}

\address{Department of Mathematics\\University of California, Irvine, 340 Rowland Hall (Bldg.\# 400),
Irvine, CA 92697-3875}
\email{isaac@math.uci.edu}
\urladdr{http://www.math.uci.edu/~isaac}

\address{Department of Mathematics and Statistics, McMaster University, 1280 Main St., Hamilton ON, Canada L8S 4K1}
\email{hartb@mcmaster.ca}
\urladdr{http://ms.mcmaster.ca/~bradd/}

\begin{abstract}
We show that any II$_1$ factor that has the same 4-quantifier theory as the hyperfinite II$_1$ factor $\R$ satisfies the conclusion of the Popa Factorial Commutant Embedding Problem (FCEP) and has the Brown property.  These results improve recent results proving the same conclusions under the stronger assumption that the factor is actually elementarily equivalent to $\R$.  In the same spirit, we improve a recent result of the first-named author, who showed that if (1) the amalgamated free product of embeddable factors over a property (T) base is once again embeddable, and (2) $\R$ is an infinitely generic embeddable factor, then the FCEP is true of all property (T) factors.  In this paper, it is shown that item (2) can be weakened to assume that $\R$ has the same 3-quantifier theory as an infinitely generic embeddable factor.    
\end{abstract}

\maketitle

\section{Introduction}

The following problem of Popa is the main motivation for the work in this paper:

\begin{prob*}[Popa's Factorial Commutant Embedding Problem (FCEP)]
Suppose that $M$ is a separable embeddable factor.  Does there exist an embedding $i:M\hookrightarrow \R^\u$ with factorial commutant, that is, such that $i(M)'\cap \R^\u$ is a factor?
\end{prob*}

Until recently, very little progress on the FCEP had been made.  In \cite{jung}, the following theorem was proven:

\begin{thm1*}\label{eeRFCEP}
If $M$ is elementarily equivalent to $\R$, then $M$ satisfies the FCEP.
\end{thm1*}

Recall that II$_1$ factors $M$ and $N$ are elementarily equivalent, denoted $M\equiv N$, if, for any sentence $\sigma$ in the language of tracial von Neumann algebras, one has $\sigma^M=\sigma^N$.  A logic-free definition can be given using the Keisler-Shelah Theorem:  $M$ and $N$ are elementarily equivalent if and only if they have isomorphic ultrapowers.\footnote{If one is willing to assume the continuum hypothesis, this can even be improved by saying that $M$ and $N$ are elementarily equivalent if and only if $M^\u\cong N^\u$ for any nonprincipal ultrafilter on $\mathbb N$.}  By \cite[Theorem 4.3]{MTOA3}, any separable II$_1$ factor $M$ has continuum many nonisomorphic separable II$_1$ factors elementarily equivalent to it, whence Theorem 1 gave continuum many new examples of separable II$_1$ factors satisfying the FCEP.

In this paper, we weaken the assumption of the previous theorem and arrive at the same conclusion.  We say that II$_1$ factors $M$ and $N$ are $k$-elementarily equivalent, denoted $M\equiv_k N$, if they agree on all formulae of quantifier-complexity at most $k$.  (This will be defined precisely in the last section.). The following is an imprecise version of our first main result:

\begin{thmA*}\label{4FCEP}
If $M\equiv_4 \R$, then $M$ satisfies the FCEP.
\end{thmA*}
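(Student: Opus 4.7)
My plan is to locate a sentence $\sigma_{\mathrm{FC}}$ of quantifier complexity at most $4$ whose vanishing in a separable embeddable $\mathrm{II}_1$ factor $N$ is equivalent to $N$ satisfying the FCEP. Since $\R$ trivially has the FCEP---the diagonal embedding $\R\hookrightarrow\R^\u$ has factorial relative commutant because $\R'\cap\R^\u$ is a factor---one then has $\sigma_{\mathrm{FC}}^\R=0$, and the hypothesis $M\equiv_4\R$ forces $\sigma_{\mathrm{FC}}^M=0$, yielding the FCEP for $M$.

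I would first note that embeddability of $M$ is automatic: $M\equiv_4\R$ implies $M\equiv_1\R$, so $M$ shares the universal theory of $\R$ and hence embeds into $\R^\u$. The remaining task is to arrange for some such embedding to have factorial commutant.

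The sentence $\sigma_{\mathrm{FC}}$ I would build has the shape
\[
\sigma_{\mathrm{FC}} \;=\; \sup_{\bar a}\,\inf_{\bar b}\,\sup_{z}\,\inf_{y,\lambda}\,\Phi(\bar a,\bar b,z,y,\lambda),
\]
with $\Phi$ quantifier-free, chosen so that (i) the outer $\sup\inf$ measures how well $\bar b$ realizes the quantifier-free type of $\bar a$, and (ii) the inner $\sup_{z}\inf_{y,\lambda}$ measures the failure of $\{\bar b\}'\cap N^\u$ to be a factor: any $z$ commuting with $\bar b$ and with everything commuting with $\bar b$ must be approximately scalar (witnessed by $\lambda$), or its noncentrality must be witnessed by some $y$ in the commutant. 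The prefix has exactly $4$ alternations. A standard ultrapower/saturation argument identifies the vanishing of $\sigma_{\mathrm{FC}}^N$ with the diagonal embedding $N\hookrightarrow N^\u$ having factorial commutant; at $N=\R$ the choice $\bar b=\bar a$ reduces this to the well-known factoriality of $\R'\cap\R^\u$.

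Applying $M\equiv_4\R$ then yields that $M'\cap M^\u$ is a factor. The last step is to upgrade this to an embedding $M\hookrightarrow\R^\u$ with factorial commutant: the partial type over $\R$ asserting ``realize $M$ with factorial commutant'' is realized in $M^\u$ via the diagonal embedding, and its pieces have quantifier complexity controlled by the $4$-theory, so by $\aleph_1$-saturation of $\R^\u$ together with $M^\u\equiv_4\R^\u$ the same type is realized in $\R^\u$, producing the desired embedding. I expect the main obstacle to be precisely this final transfer. In Theorem~1, Keisler--Shelah (under CH) provides $M^\u\cong\R^\u$, trivializing the passage from $M^\u$ to $\R^\u$; under only $\equiv_4$ this isomorphism is unavailable, and one must verify both that the partial type witnessing FCEP has genuinely bounded quantifier complexity---so that $\equiv_4$ suffices to transfer it---and that $\aleph_1$-saturation of $\R^\u$ with the shared $4$-theory really does realize this type. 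Pinning down that $4$ (and not $5$) alternations suffice across the interlocking roles of the factoriality formula seems to be the technical crux.
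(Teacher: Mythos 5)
Your proposal attempts a genuinely different route from the paper's, and it has a gap that I don't see how to close. You want to axiomatize the FCEP (or a proxy for it) by a single $\forall\exists\forall\exists$-sentence $\sigma_{\mathrm{FC}}$ whose vanishing in $N$ would certify the FCEP, and then transfer $\sigma_{\mathrm{FC}}^\R=0$ to $\sigma_{\mathrm{FC}}^M=0$ via $M\equiv_4\R$. The problem is that ``$P'\cap N^\u$ is a factor'' is not expressible by a bounded-depth condition: membership in $Z(P'\cap N^\u)$ is only recoverable as a family of $\forall_1$-conditions \emph{with parameters from the generators of $P$} and with witnesses $(\delta,e_1,\dots,e_n)$ depending on $\epsilon$, extracted from countable saturation of $N^\u$ (this is precisely what Lemma 3.1 in the paper does). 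Packaging this into a single sentence $\sup_{\bar a}\inf_{\bar b}\sup_z\inf_{y,\lambda}\Phi$ doesn't work because the $\sup_z$ is not quantifying over $\{\bar b\}'\cap N^\u$ (a type-definable, not formula-definable, set), and the modulus-of-continuity step that converts the implication ``$z$ commutes with everything in the commutant $\Rightarrow$ $z$ is scalar'' into a formula costs extra quantifiers and parameters. Notably, the paper itself does \emph{not} prove that super McDuffness is captured by a depth-4 sentence; it proves the Brown property (and hence super McDuffness) for $M\equiv_4\R$ in Section 5 by an Ehrenfeucht--Fra\"iss\'e game, which is a different mechanism.

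Your second step --- upgrading ``$M'\cap M^\u$ is a factor'' to an embedding $M\hookrightarrow\R^\u$ with factorial commutant --- is exactly the crux, and the proposed transfer does not go through. The relation $M^\u\equiv_4\R^\u$ only moves sentences of depth at most $4$ across, not partial types, and the ``type asserting realize $M$ with factorial commutant'' is not a type: realizing a copy of $M$ is a type in countably many variables (the atomic diagram), but appending ``and the commutant of the realization is a factor'' adds a condition that is not closed under the finitary/approximate-satisfaction semantics of types. The paper's actual argument sidesteps this entirely: it first produces an embedding $N\hookrightarrow\R^\u$ that is downward $\exists_3$ (Proposition 2.6, using $\R\models\Th_{\exists_4}(N)$), then invokes Brown's theorem \emph{inside $\R^\u$} to find $P\supseteq N$ with $P'\cap\R^\u$ factorial, and finally uses a new ``heir'' mechanism (Theorem 2.4 and Corollary 3.2) to pull the factoriality of $P'\cap\R^\u$ down to $N'\cap\R^\u$: a central element of $N'\cap\R^\u$ has a $\forall_1$-type over $N$ which can be extended to an heir over $P$, whose realizations must then be central in $P'\cap\R^\u$ and hence scalar. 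Brown's theorem and the heir machinery are both essential and are absent from your proposal; conversely, the sentence $\sigma_{\mathrm{FC}}$ you hope to construct plays no role in the paper's argument. So this is not an alternative proof, but an approach that stalls at the two steps you yourself flagged as the ``technical crux.''
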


In another direction, one of the main results of \cite{Popa} was progress on the FCEP problem for embeddable\footnote{In this paper, we use the term \textbf{embeddable} as an abbreviation for $\R^\u$-embeddable.} property (T) factors:

\begin{thm2*}\label{propT}
Suppose that the following two statements are true:
\begin{enumerate}
    \item Whenever $M_1$ and $M_2$ are embeddable II$_1$ factors with a common property (T) subfactor $N$, then the amalgamated free product $M_1*_N M_2$ is also embeddable.
    \item $\R$ is an \emph{infinitely generic} embeddable factor.
\end{enumerate}
Then every embeddable property (T) factor satisfies the FCEP.
\end{thm2*}

Infinitely generic factors form a large class of ``rich'' II$_1$ factors and more information about them can be found in \cite{ecfactor}.  In \cite{ecfactor}, it was claimed that $\R$ is an infinitely generic embeddable factor.  However, the proof there is incredibly flawed and settling the question of whether or not $\R$ is actually an infinitely generic embeddable factor remains an important open question.

Ideally, one would like to remove the model-theoretic assumption (2) in the previous theorem, leaving only the operator-algebraic obstacle (1).  Item (2) in the previous theorem is equivalent to the statement that $\R$ is elementarily equivalent to an infinitely generic embeddable factor.  Consequently, the following theorem, a consequence of a more general result proven in Section 4, is a strengthening of the previous result:

\begin{thmB*}\label{newpropT}
Suppose that the following two statements are true:
\begin{enumerate}
    \item Whenever $M_1$ and $M_2$ are embeddable II$_1$ factors with a common property (T) subfactor $N$, then the amalgamated free product $M_1*_N M_2$ is also embeddable.
    \item[(2')] There is an infinitely generic embeddable factor $M$ such that $M\equiv_3 \R$.
\end{enumerate}
Then every embeddable property (T) factor satisfies the FCEP.
\end{thmB*}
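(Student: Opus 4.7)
The strategy is to apply Theorem 2 not to $\R$ but to the infinitely generic embeddable factor $M$ provided by hypothesis (2'), and then transfer the FCEP conclusion from $M^\u$ to $\R^\u$ via the 3-quantifier equivalence $M \equiv_3 \R$.

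First, I would revisit the proof of Theorem 2 in \cite{Popa} and check that it is uniform in the role of $\R$: what is really proved there is that if $A$ is \emph{any} infinitely generic embeddable factor and hypothesis (1) holds, then every embeddable property (T) factor $N$ embeds into $A^\u$ with factorial commutant. Applying this with $A = M$ yields, for each such $N$, an embedding $i_N : N \hookrightarrow M^\u$ whose commutant in $M^\u$ is a factor.

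Second, I would encode, for fixed embeddable property (T) factor $N$, the statement ``$N$ embeds into $A^\u$ with factorial commutant'' as a condition on $A$ of quantifier complexity at most $3$. Writing $N$ as generated by a sequence $\bar a$ whose complete type $p$ is fixed, the condition asserts the existence in $A^\u$ of tuples $\bar b$ approximately realizing $p$ (an $\exists$-layer) such that the relative commutant $\{\bar b\}' \cap A^\u$ is a factor; the latter is $\forall\exists$ inside $A^\u$ (for every $y$ commuting with $\bar b$ that is not approximately scalar, some $z$ commuting with $\bar b$ has $\|[y,z]\|_2$ bounded below). Unwinding via {\L}o\'s and countable saturation of the ultrapower gives a $\forall\exists\forall$-shaped formula on $A$. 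Property (T) of $N$ enters here, providing the Kazhdan-type rigidity that allows approximate commutation with $\bar a$ to be upgraded to exact commutation without introducing a further existential quantifier.

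Third, with the 3-quantifier formula in hand, step one gives that it holds in $M$, and $M \equiv_3 \R$ transfers it to $\R$, yielding the desired embedding of $N$ into $\R^\u$ with factorial commutant. Since $N$ was an arbitrary embeddable property (T) factor, the FCEP holds for all such factors.

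The main obstacle is step two: the precise quantifier-complexity calculation for ``factorial commutant'' in the property (T) setting. A naïve encoding produces $\forall\exists\forall\exists$, and shaving off the innermost existential, as the bound of $3$ (rather than Theorem A's bound of $4$) demands, requires a uniform-in-data extraction of a non-centrality witness from property (T), presumably via the quantitative Kazhdan constant of $N$. This uniformity is exactly what lets property (T) improve the generic $4$-quantifier bound of Theorem A to a $3$-quantifier bound here.
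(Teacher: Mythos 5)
Your high-level intuition --- that the $3$-quantifier equivalence should be exactly enough to transfer the relevant information from an infinitely generic factor to $\R$ --- matches the paper, but your proposed route through a direct encoding of ``$N$ embeds into $A^\u$ with factorial commutant'' is not the paper's, and the crux (your step two, which you flag yourself) does not go through as sketched. The difficulty is not merely shaving a quantifier. Factoriality of $\langle\bar b\rangle'\cap A^\u$ is the statement ``whenever $y$ commutes with $\bar b$ and also with everything commuting with $\bar b$, then $y$ is scalar,'' and in continuous logic that nested implication cannot be put into prenex form of bounded depth without a \emph{uniform} continuity modulus linking the inner universal quantifier to the outer one. In the paper such a modulus $\alpha$ is produced via \cite[Proposition 7.14]{mtfms}, but only \emph{after} a particular non-central $a$ has been fixed in a proof by contradiction; the modulus depends on $a$ and is not uniform, which is precisely what your encoding would need. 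There is also a concrete confusion in your unwinding step: a sentence takes the same value in $A$ and $A^\u$, so an $\exists\forall\exists$-shaped condition in $A^\u$ remains $\exists\forall\exists$ on $A$; it does not become $\forall\exists\forall$.

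The paper's proof is structured differently and is designed exactly to avoid writing ``factorial commutant'' as a sentence. Given an embeddable property (T) factor $N$, one first embeds $N$ into an e.c.\ embeddable factor $M_0$ elementarily equivalent to the given infinitely generic factor, so that $M_0\equiv_3\R$ and hence $\R\models\Th_{\exists_3}(M_0)$. Proposition \ref{fragmentsandembeddings} then supplies a \emph{downward $\exists_2$} embedding $j:M_0\hookrightarrow\R^\u$. Theorem \ref{T} shows that $j(N)'\cap\R^\u$ is a factor: the argument fixes a hypothetical non-scalar $a\in Z(j(N)'\cap\R^\u)$, uses the Kazhdan set of $N$ together with the definability modulus $\alpha$ to produce a single $\exists_2$-sentence true in $\R^\u$, pulls it down to $M_0$ via the downward $\exists_2$ embedding, and derives a contradiction using the bicommutant identity $(N'\cap M_0)'\cap M_0=N$ (Lemma \ref{star}), which is where hypothesis $(*)$ and the e.c.-ness of $M_0$ enter. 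Your proposal has no analogue of the intermediate e.c.\ factor $M_0$ or of Lemma \ref{star}; these are essential, because the factoriality of the commutant in $\R^\u$ is deduced \emph{through} $M_0$, not witnessed inside $\R^\u$ (or $M^\u$) by a bounded-depth sentence.
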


It is worth noting that any infinitely generic embeddable factor $M$ satisfies $M\equiv_2 \R$.  In Section 4, we also note that the statement that there is an infinitely generic embeddable factor $M$ such that $M\equiv_3 \R$ is already known to be ``halfway true.''

% In \cite{jung}, we raised the question whether or not every existentially closed embeddable factor satisfies the FCEP; if this were true, it would give a ``generic'' positive solution to the FCEP.  In this paper, we make some partial progress on this question by elucidating when a large, natural class of e.c. embeddable factors, the so-called infinitely generic embeddable factors, satisfy the FCEP.

A crucial ingredient to the proof of Theorem 1 above is the following result of Nate Brown \cite[Theorem 6.9]{brown}:

\begin{fact*}
If $N$ is a separable subfactor of $\R^\u$, then there is a separable subfactor $P$ of $\R^\u$ with $N\subseteq P$ such that $P'\cap \R^\u$ is a II$_1$ factor.
\end{fact*}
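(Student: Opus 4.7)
The plan is to build $P$ as the SOT-closure of an increasing chain $N = N_0 \subseteq N_1 \subseteq N_2 \subseteq \cdots$ of separable subfactors of $\R^\u$, arranged via a diagonal argument so that every potentially non-trivial projection in $P' \cap \R^\u$ is ``broken'' by a unitary added along the chain. Note that once we verify $P' \cap \R^\u$ is a factor, $P$ itself is automatically a factor: any $z \in Z(P)$ lies in $P \cap (P' \cap \R^\u)$ and, being in $P$, commutes with all of $P' \cap \R^\u$, so $z \in Z(P' \cap \R^\u)$. The II$_1$ property of $P' \cap \R^\u$ follows from the standard fact that the relative commutant of any separable subalgebra of $\R^\u$ is diffuse.

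The recursive step proceeds as follows. Given the separable subfactor $N_n \subseteq \R^\u$, select a countable set $\mathcal{P}_n$ of non-trivial projections in $N_n' \cap \R^\u$ which is $\|\cdot\|_2$-dense among such projections. For each $p \in \mathcal{P}_n$, since $\R^\u$ is a II$_1$ factor, there is a unitary $u_p \in \R^\u$ with $\|u_p p u_p^* - p\|_2 \geq \sqrt{2\min(\tau(p),\, 1-\tau(p))}$; one obtains such $u_p$ by having it exchange a subprojection of $p$ with a subprojection of $1-p$ of equal trace. Let $N_{n+1}$ be a separable subfactor of $\R^\u$ containing $N_n \cup \{u_p : p \in \mathcal{P}_n\}$, using the standard fact that any separable subalgebra of $\R^\u$ sits inside a separable subfactor. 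Set $P := \overline{\bigcup_n N_n}^{\mathrm{SOT}}$. To check $P' \cap \R^\u$ is a factor, suppose toward contradiction that $z \in Z(P' \cap \R^\u)$ is a projection of trace $t \in (0,1)$. Since $z$ commutes with each $N_n$, $z \in N_n' \cap \R^\u$, and by density we can choose $p_n \in \mathcal{P}_n$ with $\|z - p_n\|_2 \to 0$; then $\tau(p_n) \to t$ and $c_n := \sqrt{2\min(\tau(p_n), 1-\tau(p_n))}$ is bounded below by a positive constant for large $n$. The associated unitary $u_{p_n}$ lies in $N_{n+1} \subseteq P$, hence commutes with $z$, giving
\[
c_n \leq \|u_{p_n} p_n u_{p_n}^* - p_n\|_2 \leq \|u_{p_n}(p_n-z)u_{p_n}^*\|_2 + \|z-p_n\|_2 = 2\|z-p_n\|_2 \to 0,
\]
a contradiction.

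The main obstacle is the existence of a countable $\|\cdot\|_2$-dense set $\mathcal{P}_n$ of projections in $N_n' \cap \R^\u$: the $2$-norm unit ball of this relative commutant need not be separable, since $\R^\u$ itself has non-separable predual. I expect this is addressed either by invoking the $\aleph_1$-saturation of $\R^\u$ to carry out a transfinite construction that stabilizes by cofinality, or by the observation that one only needs density with respect to projections that could serve as $2$-norm approximants to common elements of $\bigcap_n (N_n' \cap \R^\u)$---a considerably smaller and more tractable ``relevant'' collection than the full relative commutant. Brown's actual proof ultimately hinges on implementing this selection carefully.
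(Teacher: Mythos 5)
The paper does not actually prove this statement; it is cited as Brown's Theorem 6.9 from \cite{brown} and used as a black box. So there is no ``paper proof'' to compare against, and I can only assess your proposal on its own merits and against what Brown actually does.

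Your diagonal scheme is clean and would succeed \emph{if} each relative commutant $N_n'\cap\R^\u$ were $\|\cdot\|_2$-separable, but as you correctly note it is not: for separable $N_n$, the algebra $N_n'\cap\R^\u$ has density character $2^{\aleph_0}$ (e.g.\ $\R'\cap\R^\u$ is already a II$_1$ factor of this size), so no countable set $\mathcal P_n$ can be $\|\cdot\|_2$-dense among its projections. This is not a cosmetic issue --- it is the whole difficulty --- and neither of the two remedies you sketch actually fills the hole. The ``relevant projections'' idea is circular: the projection $z$ you need to have pre-emptively broken is an element of $P'\cap\R^\u = \bigcap_n(N_n'\cap\R^\u)$, an algebra that is only determined once the \emph{entire} chain has been built, so there is no way at a finite stage to know which countable family of projections to target; and the intersection itself is again generically nonseparable. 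The transfinite alternative has the opposite problem: each step $P_\alpha \subsetneq P_{\alpha+1}$ is genuinely strict (the new unitary fails to commute with $z_\alpha \in P_\alpha'\cap\R^\u$, so cannot already lie in $P_\alpha$), and nothing in the construction forces the chain to stabilize at a countable ordinal; if it runs to $\omega_1$ the limit is no longer separable. Countable saturation of $\R^\u$ does not by itself supply a stopping argument. So the proposal, as you yourself flag, has a genuine unresolved gap at its core.

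It is also worth saying that Brown's proof is not a ``careful implementation'' of your selection scheme: it proceeds by entirely different means, equipping the space $\HOM(N,\R^\u)$ of unitary-conjugacy classes of embeddings with a compact convex structure, showing that the inclusion lies in a face whose extreme points are exactly the embeddings with factorial relative commutant, and extracting the desired $P$ from a Choquet/Krein--Milman-type analysis rather than from an exhaustion of projections. A direct diagonalization in the spirit of your attempt would need some additional structural input (for instance, a separability or definability statement about $Z(N'\cap\R^\u)$) that you have not identified and that I do not believe is elementary.
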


In \cite{jung}, we said the II$_1$ factor $M$ had the \textbf{Brown property} if, for all separable subfactors $N$ of $M^\u$, then there is a separable subfactor $P$ of $M^\u$ with $N\subseteq P$ such that $P'\cap M^\u$ is a II$_1$ factor.  It was shown in \cite{jung} that any $M\equiv \R$ has the Brown property.  In the last section of this paper, we prove a strengthening of this result:

\begin{thmC*}\label{4Brown}
If $M\equiv_4 \R$, then $M$ has the Brown property.
\end{thmC*}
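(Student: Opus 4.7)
The plan is to encode the Brown property as a family of $\Pi_4$ sentences in the continuous theory of tracial von Neumann algebras, which hold in $\R$ by Nate Brown's theorem (the Fact cited in the introduction) and therefore transfer to $M$ via $M \equiv_4 \R$.

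First, I would re-express the Brown property in terms of tuples rather than abstract subfactors: a separable subfactor $N \subseteq M^\u$ is the von Neumann algebra generated by a countable self-adjoint sequence $\bar a \subset M^\u$, and similarly $P$ is generated by a countable $\bar b \supseteq \bar a$. The condition ``$W^*(\bar b)' \cap M^\u$ is a factor'' is rephrased via the Dixmier averaging characterization: a finite tracial von Neumann algebra $Q$ is a factor iff for every self-adjoint $x \in Q$ and every $\varepsilon > 0$ there exist unitaries $u_1, \ldots, u_m \in Q$ and convex weights $\lambda_1, \ldots, \lambda_m$ with $\|\sum_i \lambda_i u_i x u_i^* - \tau(x)1\|_2 < \varepsilon$.

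Next, I would unwind this into a family of sentences about $M$. For each triple $(n, m, \varepsilon)$, consider
\[
\sigma_{n,m,\varepsilon} \;:=\; \sup_{\bar a}\, \inf_{\bar b}\, \sup_{x}\, \inf_{\bar u, \bar\lambda}\, \phi_{n,m,\varepsilon}(\bar a, \bar b, x, \bar u, \bar\lambda),
\]
where $\bar a, \bar b$ are $n$-tuples from the operator-norm unit ball, $x$ is a self-adjoint contraction, $\bar u$ is an $m$-tuple of unitaries, $\bar\lambda$ is a probability vector of length $m$, and $\phi_{n,m,\varepsilon}$ is a quantifier-free formula penalising (i) $\bar b$ failing to extend $\bar a$, (ii) $x$ or some $u_j$ failing to approximately commute with $\bar b$, or (iii) failure of the Dixmier inequality. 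This is manifestly a $\Pi_4$ sentence. A standard argument combining ultrapower lifting of countable tuples in $M^\u$ to sequences in $M$ with countable saturation of $M^\u$ shows that ``$M$ has the Brown property'' is equivalent to $\sigma_{n,m,\varepsilon}^M = 0$ for all $(n,m,\varepsilon)$.

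By Brown's theorem, each $\sigma_{n,m,\varepsilon}^{\R} = 0$. Applying $M \equiv_4 \R$ gives $\sigma_{n,m,\varepsilon}^{M} = 0$, whence $M$ has the Brown property. The main obstacle is the $\Pi_4$ encoding: the commutation constraints ``$x$ commutes with $\bar b$'' and ``$u_j$ commutes with $\bar b$'' must be folded into $\phi$ as penalty terms rather than introduced via further quantifiers, and the Dixmier witness must be taken uniformly so that the block structure really is $\sup\,\inf\,\sup\,\inf$ (four alternations). This is precisely where the $4$ in the hypothesis enters; using instead the direct ``trivial center'' formulation of ``factor'' would introduce an additional quantifier and force the stronger hypothesis $M \equiv_5 \R$. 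The transfer by $\equiv_4$, the mild check that the commutant is II$_1$ rather than of type I (automatic once it is a non-trivial subfactor of $M^\u$), and the ultimate reassembly in $M^\u$ via countable saturation are then routine.
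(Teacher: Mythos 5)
Your approach is genuinely different from the paper's and it has a real gap. The paper does not attempt to write the Brown property as a family of sentences; instead it uses the Ehrenfeucht--Fra\"iss\'e game. Because $M^\u$ and $\R^\u$ are $\aleph_1$-saturated, the hypothesis $M\equiv_4\R$ gives player II a winning strategy in the $4$-round game $\mathfrak G(M^\u,\R^\u,4)$, and crucially in this game the players may play \emph{countable} tuples. The four rounds are: play a generating sequence $\vec a_1$ for $N$; transfer to $\R^\u$, apply Brown's theorem there to get $P^*$, and play a generating sequence for $P^*$ back; play a candidate central element $a_3\in Z(P'\cap M^\u)$; play an arbitrary $b_4\in (P^*)'\cap \R^\u$. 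Since the resulting map extends to an isomorphism of the generated subalgebras, one deduces $b_3\in Z((P^*)'\cap\R^\u)=\mathbb C$ and hence $a_3\in\mathbb C$. This sidesteps entirely the question of whether ``Brown property'' can be expressed by sentences.

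The gap in your proposal is precisely the sentence you call ``a standard argument.'' The equivalence between the Brown property for $M$ and $\sigma_{n,m,\varepsilon}^M=0$ for all $(n,m,\varepsilon)$ is the whole content of the theorem, and it is not routine; in fact it appears to fail as stated. The difficulty: $P$ is only separable, not finitely generated, so any finite tuple $\bar b$ captures only a piece of $P$. An element $x$ that (approximately) commutes with $\bar b$ need not be (close to) an element of $P'\cap M^\u$, and for such $x$ there is no reason for Dixmier averaging to succeed, since $x$ need not lie in the factor $P'\cap M^\u$ at all. So even the ``easy'' direction (Brown property $\Rightarrow\ \sigma_{n,m,\varepsilon}^\R=0$) is in doubt: the witness $\bar b$ you would want to play is an initial segment of a generating sequence for $P$, but then the $\sup_x$ ranges over elements that approximately commute with only that initial segment, and these form a strictly larger set than $P'\cap M^\u$. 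There is no uniform modulus controlling this because the gap between $W^*(\bar b)'$ and $P'$ does not shrink at a rate determined by $n,m,\varepsilon$ alone; it depends on how $P$ sits inside $M^\u$. The converse direction inherits the same problem plus the additional issue of assembling the approximate, $(n,m,\varepsilon)$-indexed witnesses $\bar b^{(n,m,\varepsilon)}$ (which need not be nested) into a single separable $P$ with exactly factorial commutant. If you want to pursue a sentence-encoding approach, you would need to confront the separable-versus-finitely-generated mismatch head-on; the EF-game formulation in the paper is exactly the device that allows countable tuples to be transferred in one move, which is what makes the argument go through.
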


% In this paper, we define a natural strengthening of the Brown property, called the \textbf{strong Brown property}, and prove the following result:

% \begin{thmC*}\label{strongbrown}
% The following statements are equivalent:
% \begin{enumerate}
%     \item $\R$ has the strong Brown property;
%     \item Every infinitely generic embeddable factor satisfies the FCEP.
% \end{enumerate}
% \end{thmC*}

An interesting question arises:  are these results actually improvements of their predecessors? Indeed, perhaps it is the case that there is $k\in \mathbb N$ such that if $M\equiv_k \R$, then $M\equiv \R$.  If this were to happen, then one would say that $\Th(\R)$ has \emph{quantifier simplification}.  Given recent results showing that the $\Th(\R)$ is very complicated from the model-theoretic perspective (see, e.g., \cite{ecfactor} and \cite{universaltheory}), we strongly believe in the following:

\begin{conj*}
$\Th(\R)$ does not admit quantifier simplification.
\end{conj*}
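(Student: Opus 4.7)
The conjecture asserts that for every $k \in \mathbb N$, there is a II$_1$ factor $M$ with $M \equiv_k \R$ but $M \not\equiv \R$; equivalently, no finite $k$-quantifier theory $\Th_k(\R)$ is complete as a theory of II$_1$ factors. My plan is to attack this by combining the framework of infinitely generic embeddable factors from \cite{ecfactor} with a stepwise amalgamation construction, and to use the complexity of $\Th(\R)$ from \cite{universaltheory} as a backup strategy.

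The natural first move is the low-complexity case. The paper notes that every infinitely generic embeddable factor $M$ satisfies $M \equiv_2 \R$, so if $\Th(\R)$ admitted quantifier simplification at level $2$, every such $M$ would be elementarily equivalent to $\R$, forcing $\R$ itself to be infinitely generic among embeddable factors --- the main open problem of \cite{ecfactor}. A negative answer to that question (widely suspected) would immediately refute quantifier simplification at levels $1$ and $2$, and would also show, via Theorem B's hypothesis (2'), that working at level $3$ is genuinely stronger than working up to elementary equivalence. For higher $k$, I would attempt an inductive construction: starting with a separable factor $M_k \equiv_k \R$ with $M_k \not\equiv \R$, I would try to promote $M_k$ to a factor $M_{k+1} \equiv_{k+1} \R$ that still differs from $\R$ at some higher quantifier level, using amalgamated free products over a rich common subfactor together with a careful ultrapower/generic-chain argument to erase the level-$(k+1)$ disagreement while keeping a reserved witness sentence of higher complexity intact.

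The main obstacle, and the hard part, is that we currently have very few tools to controllably raise the level of agreement with $\R$ while preserving non-elementary-equivalence: the usual amalgamation techniques either collapse the whole theory to $\Th(\R)$ or destroy $k$-agreement in uncontrolled ways. An alternative, potentially cleaner route is descriptive-complexity theoretic: if one could show that $\Th(\R)$ is of strictly higher descriptive complexity than each $\Th_k(\R)$ --- for example, by exhibiting, for each $k$, a reduction from a non-arithmetic set into $\Th(\R)$ whose image lies outside the collection of sentences equivalent modulo $\Th(\R)$ to $k$-quantifier sentences --- then by compactness no $\Th_k(\R)$ can be complete, and the conjecture follows. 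Either route requires substantially new model-theoretic input about $\R$, which is precisely why this is stated as a conjecture rather than a theorem.
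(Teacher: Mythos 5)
This statement is labeled as a \emph{conjecture} in the paper and is explicitly left open: the authors offer no proof, only the remark that the known model-theoretic complexity of $\Th(\R)$ (citing \cite{ecfactor} and \cite{universaltheory}) is evidence in its favor, and then proceed to ``work under the assumption that the previous Conjecture has a positive solution.'' So there is no argument in the paper for you to match, and your proposal --- which is candidly a strategy sketch and ends by acknowledging that ``either route requires substantially new model-theoretic input'' --- is consistent with that status. You correctly unpack the definition (for every $k$ there is $M$ with $M\equiv_k\R$ but $M\not\equiv\R$), and your observation about the level-$2$ case is sound: if simplification held at level $2$, then every infinitely generic embeddable factor would be elementarily equivalent to $\R$, and since $\R$ is e.c.\ and infinitely generic factors form an elementary class among e.c.\ embeddable factors, this would force $\R$ itself to be infinitely generic, resolving the open problem from \cite{ecfactor}.

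One caution on your ``descriptive-complexity'' fallback: it is not clear that the fragments $\Th_k(\R)$ are of strictly lower complexity than $\Th(\R)$ in a way you could leverage. Indeed, the cited result of \cite{universaltheory} is precisely that already the \emph{universal} ($\forall_1$) theory of $\R$ is not computable, so the low-level fragments are themselves highly complex, and a clean complexity separation between $\Th_k(\R)$ and $\Th(\R)$ is far from obvious. This does not make your sketch wrong --- it just means that route is no easier than the direct construction --- and the honest framing of your proposal as a plan rather than a proof is appropriate for an open conjecture.
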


For the rest of this paper, we work under the assumption that the previous Conjecture has a positive solution.  In this case, Theorem A yields continuum many examples of factors satisfying the FCEP not covered by Theorem 1.  Similarly, Theorem C yields continuum many new examples of factors with the Brown property.

Infinitely generic embeddable factors form a subclass of the more general class of \textbf{existentially closed embeddable factors}.  An embeddable factor $M$ is existentially closed (e.c.) if:  whenever $N$ is an embeddable factor with $M\subseteq N$, there is an embedding $N\hookrightarrow M^\u$ that restricts to the diagonal embedding $M\hookrightarrow M^\u$.  It was noted in \cite{ecfactor} that $\R$ is an e.c. embeddable factor.  Existentially closed embeddable factors have proven very important in applications of model-theoretic ideas to the study of II$_1$ factors.  It is a major open question whether or not there are two non-elementarily equivalent e.c. embeddable factors.  If $\R$ is not infinitely generic, then we would have an example of such a pair of e.c. embeddable factors.  However, it could still be the case that all e.c. factors have the same 3-quantifier theory, in which case (2') in Theorem B is actually satisfied.

In order to keep this note relatively self-contained, we do not include much model-theoretic or operator-algebraic background.  A rather lengthy introduction to model-theoretic ideas as they pertain to problems around factorial commutants can be found in \cite{jung}.
% Now we consider Theorem C.  If $\R$ does not have the strong Brown property, then the FCEP is not true in general, which is of course very interesting!  Otherwise, let us suppose that $\R$ does have the strong Brown property, whence all infinitely generic embeddable factors satisfy the FCEP.  Is this covered by Theorems 1 or A above?  If it is covered by Theorem 1, then $\R$ is infinitely generic, which is indeed interesting as it fixes a hole in \cite{} and also shows that in Theorem 2 above, item (2), is satisfied, leaving just one very natural operator algebraic condition to check in order to know that all embeddable property (T) factors satisfy the FCEP.  Otherwise, $\R$ is not infinitely generic.  This is interesting as it presents the first example of two non-elementarily equivalent e.c. embeddable factors.  However, it could still be the case that the statement that all infinitely generic factors satisfy the FCEP because they all are 3-equivalent to $\R$.  In this case, item (2') in Theorem D is satisfied, whence once again we are happy.

In Section 2, we prove the main model-theoretic tools needed in the proof of Theorem A.  In Section 3 we prove Theorem A, in Section 4 we prove Theorem B, and in Section 5 we prove Theorem C.

\section{Weak heirs and weak embeddings}

In this section, we fix a continuous language $L$.  We say that a formula $\varphi$ is in \textbf{prenex normal form} if it is of the form $$Q_1x_1\cdots Q_m x_m \psi(x_1,\ldots,x_m,\vec y),$$ with each $Q_i\in \{\sup,\inf\}$ and with $\psi$ quantifier-free.  If the $Q_i$'s alternate type, then we say that $\varphi$ is $\forall_m$ (respectively $\exists_m$) if $Q_1=\sup$ (resp. $Q_1=\inf$).\footnote{Technically we really should be speaking of $m-1$ alternations of \emph{blocks} of quantifiers of the same length, but we blur this distinction here.}  If a formula is equivalent to a $\forall_m$ or $\exists_m$ formula, we often abuse terminology and refer to the formula itself as $\forall_m$ or $\exists_m$.

By a \textbf{fragment} of $L$-formulae, we mean a set $\Delta$ consisting of all $\forall_m$-formulae or of all $\exists_m$-formulae for some $m$.
% Let $\Delta$ and $\Delta'$ represent fragments of formulae, for us always of the form $Q_m$ for some $Q\in\{\forall,\exists\}$ and $m\geq 0$.

\begin{defn}
Fix an $L$-structure $M$, parameter sets $A\subseteq B\subseteq M$, and fragments $\Delta$ and $\Delta'$.
\begin{enumerate}
\item For $c\in M$, we set $\tp_\Delta^M(c/A)$ to be the set of all conditions $\varphi(x)=r$, where $\varphi\in \Delta$ has parameters from $A$ and $\varphi(c)^M=r$.
    \item $S_\Delta^M(A)$ denotes the set of all $\tp_{\Delta}^M(c/A)$ for $c\in M$.
    \item For $p\in S_\Delta^M(A)$ and $\varphi(x)$ a formula from $\Delta$ with parameters from $A$, we set $\varphi(x)^p$ to be the unique $r$ so that $\varphi(x)=r$ belongs to $p$.
    \item For $c\in M$, we set $\tp_{\Delta,\Delta'}^M(c/A,B)$ to be the union of $\tp_\Delta^M(c/A)$ and $\tp_{\Delta'}^M(c/B)$.
    \item We let $S_{\Delta,\Delta'}(A,B)$ denote the set of all $\tp_{\Delta,\Delta'}^M(c/A,B)$ for $c\in M$.  We extend the notation $\varphi(x)^p$ to $S_{\Delta,\Delta'}(A,B)$ in the obvious way.
    
    \item If $p\in S_\Delta(A)$, $q\in S_{\Delta,\Delta'}(A,B)$, and $\Delta'\subseteq \Delta$, we say that $q$ is an \textbf{heir} of $p$ if, for every $b\in B$, every $\varphi(x,y)\in \Delta'$, and every $\epsilon>0$, there is $a\in A$ such that $|\varphi(x,a)^p-\varphi(x,b)^q|<\epsilon$.
\end{enumerate}
\end{defn}

\begin{defn}
Suppose that $i:N\hookrightarrow M$ is an embedding between $L$-structures and $\Delta$ is a fragment.  We say that $i$ is:
\begin{enumerate}
    \item \textbf{downward $\Delta$} if, for any nonnegative formula $\varphi(x)\in \Delta$ and any $a\in N$, if $\varphi(i(a))^M=0$, then $\varphi(a)^N=0$;
    \item \textbf{upward $\Delta$} if, for any nonnegative formula $\varphi(x)\in \Delta$ and any $a\in N$, if $\varphi(a)^N=0$, then $\varphi(i(a))^M=0$.
\end{enumerate}
\end{defn}

We note one obvious fact:

\begin{lem}
Given an embedding $i:N\hookrightarrow M$, we have that $i$ is downwards $\exists_m$ if and only if $i$ is upwards $\forall_m$.
\end{lem}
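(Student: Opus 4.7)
The lemma is the continuous-logic version of the classical fact that $\forall_m$ and $\exists_m$ prenex sentences are negations of one another, and the plan is to realize this duality concretely through the truncated subtraction operator $r \dotminus (\cdot)$. For nonnegative $\varphi$, the value $r \dotminus \varphi$ equals $0$ precisely when $\varphi \geq r$, so $r \dotminus \varphi$ functions as an approximate negation of the condition ``$\varphi = 0$''.

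The one calculation I would record at the outset is that truncated subtraction swaps quantifiers: $r \dotminus \sup_y \psi = \inf_y (r \dotminus \psi)$ and $r \dotminus \inf_y \psi = \sup_y (r \dotminus \psi)$. Propagating these identities through the entire prenex prefix shows that whenever $\varphi$ is nonnegative and $\forall_m$, the formula $r \dotminus \varphi$ is (equivalent to) a nonnegative $\exists_m$ formula, and symmetrically for the reverse conversion. This is the only piece of book-keeping required.

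With this in hand, for the forward direction I would assume $i$ is downwards $\exists_m$, take a nonnegative $\forall_m$ formula $\varphi$ and $a \in N$ with $\varphi(a)^N = 0$, and argue by contradiction: if $\varphi(i(a))^M \geq r$ for some $r > 0$, then the nonnegative $\exists_m$ formula $\psi := r \dotminus \varphi$ satisfies $\psi(i(a))^M = 0$, so downwardness yields $\psi(a)^N = 0$, i.e., $\varphi(a)^N \geq r$, contradicting $\varphi(a)^N = 0$. The reverse implication is entirely symmetric, with the roles of $\exists_m$ and $\forall_m$, and of ``downwards'' and ``upwards'', interchanged. There is no substantive obstacle here; the only thing to verify carefully is the quantifier-swap calculus of $\dotminus$, which is routine and explains why the authors deem the result ``obvious''.
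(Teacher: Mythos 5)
Your proposal is correct and follows essentially the same route as the paper: both proofs use the truncated-subtraction trick, passing from a nonnegative $\forall_m$ formula $\varphi$ to the nonnegative $\exists_m$ formula $r \dotminus \varphi$ and invoking downwardness to reach a contradiction, with the reverse direction handled symmetrically. The only difference is that you spell out the quantifier-swap identities for $\dotminus$ explicitly, which the paper leaves implicit in the phrase ``equivalent to a $\exists_m$ formula.''
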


\begin{proof}
Suppose that $i$ is not upwards $\forall_m$, so there is a nonnegative $\forall_m$ formula $\varphi(x)$ and $a\in N$ such that $\varphi(a)^N=0$ but $\varphi(i(a))^M=\epsilon>0$.  Then $(\epsilon\dminus \varphi(i(a)))^M=0$ and since this formula is equivalent to a $\exists_m$ formula, we have that $(\epsilon\dminus \varphi(a))^N=0$, a contradiction.  The other direction is similar.
\end{proof}

% In an earlier version, we had an upward assumption as well, but I think the previous lemma shows that this is obsolete.

The following is our main technical result concerning the existence of weak heirs.  In the remainder of this paper, $\u$ denotes a countably incomplete ultrafilter on some index set (unless otherwise specified).

\begin{thm}\label{heir}
Suppose that $M$ is a separable $L$-structure.  Fix a separable substructure $N$ of $M^\u$ such that the inclusion $N\subseteq M^\u$ is downward $\exists_{m+2}$.  Fix also $p\in S_{\forall_m}(N)$.  Then for any separable parameter set $A$ with $N\subseteq A\subseteq M^\u$ and any $n<m$, there is $q\in S_{\forall_m,\forall_n}(N,A)$ that is an heir of $p$.
% and a separable parameter set $A$ with $N\subseteq A\subseteq M^\u$.  Fix also $p\in S_{\forall_m}(N)$.  Suppose that the inclusion $N\subseteq M^\u$ is downward $\exists_{m+2}$ and upward $\forall_{m+1}$.  Then for any $n<m$, there is $q\in S_{\forall_m,\forall_n}(N,A)$ that is an heir of $p$.
\end{thm}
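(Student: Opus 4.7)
The plan is to use $\aleph_1$-saturation of $M^\u$ to reduce the existence of $q$ to a finite satisfiability problem, and then discharge that problem using the downward $\exists_{m+2}$ hypothesis. Let $c_0 \in M^\u$ realize $p$. A realization $c' \in M^\u$ of the partial type
\[
\Sigma(x) := \{\chi(x) = \chi^p : \chi \in \forall_m, \text{ parameters in } N\} \cup \bigl\{d\bigl(\varphi(x, b),\ \overline{\{\varphi(x, a)^p : a \in N\}}\bigr) = 0 : \varphi \in \forall_n,\ b \in A\bigr\}
\]
yields the desired heir as $q := \tp^{M^\u}_{\forall_m, \forall_n}(c'/N, A)$. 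By saturation, it suffices to show $\Sigma$ is finitely satisfiable: given finitely many $\chi_i \in \forall_m$ with $r_i := \chi_i^p$, finitely many $\varphi_j \in \forall_n$ with $b_j \in A$, and $\epsilon > 0$, one must produce $c' \in M^\u$ and $a_j \in N$ with $|\chi_i(c') - r_i| < \epsilon$ and $|\varphi_j(c', b_j) - \varphi_j(c', a_j)| < \epsilon$; here one absorbs the $\forall_n$-conditions $\varphi_j(x, a_j)$ into the list of $\chi_i$'s so that $\varphi_j(c', a_j) \approx \varphi_j(x, a_j)^p$.

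The main tool is the formula
\[
\Phi(\vec z, \vec y) := \inf_x \max\Bigl(\max_i |\chi_i(x) - r_i|,\ \max_j |\varphi_j(x, z_j) - \varphi_j(x, y_j)|\Bigr),
\]
with free variables $\vec z, \vec y$ and parameters only in $N$. A careful count of quantifier alternations---using that $|\psi - r|$ for $\psi \in \forall_m$ is equivalent to a $\forall_{m+1}$-formula, that the analogous expression for $\varphi_j \in \forall_n$ is $\forall_{n+1} \subseteq \forall_{m+1}$, that maxima preserve complexity, and that the outer $\inf_x$ adds one alternation---shows $\Phi \in \exists_{m+2}$. Setting $\vec z = \vec y = \vec b$ and $x = c_0$ yields $\Phi(\vec b, \vec b)^{M^\u} = 0$; the task is then to find $\vec a \in N^l$ with $\Phi(\vec b, \vec a)^{M^\u} < \epsilon$.

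The downward $\exists_{m+2}$ hypothesis enters in two compatible ways. First, by the lemma preceding the theorem, downward $\exists_{m+2}$ is equivalent to upward $\forall_{m+2}$; combining these, the values of any $\exists_{m+2}$- or $\forall_{m+2}$-formula over $N$ at $N$-inputs coincide in $N$ and in $M^\u$. In particular $\Phi(\vec z, \vec y)^N = \Phi(\vec z, \vec y)^{M^\u}$ for $\vec z, \vec y \in N$, giving a faithful reflection of $\Phi$-values between $N$ and $M^\u$ on $N$-inputs. Second, the $\exists_{m+2}$-sentence $\inf_{\vec z, \vec y, x} \Phi$ has value $0$ in $M^\u$ (witnessed diagonally), so by downward the same holds in $N$, producing joint $N$-witnesses for the infimum. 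Iterating this transfer along a countable dense enumeration of $A$---refining the choice of $\vec a \in N^l$ at each step to accommodate the next $b$-parameter while preserving all previously established approximate conditions---produces the required $c'$ and $\vec a$.

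The principal obstacle is the presence of the $A$-parameters $b_j$ in the heir condition: the downward $\exists_{m+2}$ hypothesis applies only to formulas with $N$-parameters. The resolution---treating the $b_j$'s as fresh variables $\vec z$, and recovering their correct interpretations via a back-and-forth construction along a dense subset of $A$ while using downward at each stage to pull the $\vec y$-witnesses into $N$---is the main technical content I expect the proof to require.
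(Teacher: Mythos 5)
Your high-level plan (realize the heir type in $M^\u$ by countable saturation, and reduce consistency to a transfer using the downward $\exists_{m+2}$ hypothesis) matches the paper's, and you correctly single out the crux: the $A$-parameters $b_j$ block a direct application of the downward hypothesis. However, the resolution you sketch does not close that gap. When you pass from $\Phi(\vec z,\vec y)$ to $\inf_{\vec z,\vec y,x}\Phi$ and transfer down to $N$, the $N$-witnesses you obtain are \emph{fresh} elements $\vec z_0\in N$ playing the role of $\vec z$; nothing ties $\vec z_0$ to the particular $\vec b\in A$ you actually need, so this gives no information about $\inf_{\vec y}\Phi(\vec b,\vec y)$. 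The ``iterate along a dense enumeration of $A$'' step is where the argument would have to do real work, and as stated it is not an argument: each pass produces $N$-substitutes for the $\vec z$-slot rather than $N$-values for the $\vec y$-slot with $\vec z$ held at $\vec b$, and iterating does not change that. Moreover your transfer formula $\Phi$ has $\inf_x$ as the innermost quantifier, which is the wrong shape for the needed transfer: after moving to $N$ and back you have no ``for all $x$'' left over into which you could plug a realization of $p$.

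The paper avoids this entirely by arguing by \emph{contradiction} and by formulating the second family of type-conditions one-sidedly. One seeks $a\in M^\u$ with $\psi(a)=\psi^p$ for $\forall_m$-formulas $\psi$ over $N$, and $\varphi(a,c)\geq\epsilon/2$ for every $\forall_{n+1}$-formula $\varphi(x,y)$ and $c\in A$ with $\varphi(x,b)^p\geq\epsilon$ for all $b\in N$. If no such $a$ exists, saturation yields a finite obstruction of the form $\bigl(\sup_x\min(\delta\dotminus\psi(x),\min_i(\varphi_i(x,c_i)\dotminus\tfrac{\epsilon}{2}))\bigr)^{M^\u}=0$. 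Because this $\sup_x$-statement is being contradicted, one may weaken it by replacing the $A$-parameters $c_i$ with $\inf_{y_i}$, producing an $\exists_{m+2}$-formula with parameters only in $N$; downward transfer gives the same in $N$, one picks $\vec d\in N$ nearly achieving the $\inf$, and upward $\forall_{m+1}$-transfer brings the resulting $\sup_x$-statement with parameters $\vec d\in N$ back to $M^\u$. Now --- crucially --- the surviving $\sup_x$ lets one substitute an actual realization $a$ of $p$; since $\psi(a)=0$ one deduces $\varphi_i(x,d_i)^p=\varphi_i(a,d_i)<\epsilon$ for some $i$, contradicting $\varphi_i(x,b)^p\geq\epsilon$ for all $b\in N$ (as $d_i\in N$). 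It is this contradiction structure --- quantify out the $A$-parameters in a $\sup_x$-obstruction, transfer, re-instantiate at $p$ --- that your proposal is missing, and I don't see a way to recover it from the direct-construction route with $\inf_x\max(\dots)$.
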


\begin{proof}
We seek $a\in M^\u$ satisfying the following two kinds of conditions:
\begin{enumerate}
\item $\psi(a)=\psi(x)^p$ for any $\forall_m$-formula $\psi(x)$ with parameters from $N$;
\item $\varphi(a,c)^{M^\u}\geq \frac{\epsilon}{2}$ for any $\forall_{n+1}$-formula $\varphi(x,y)$ with parameters from $A$ and any $\epsilon>0$ such that $\varphi(x,b)^p\geq \epsilon$ for all $b\in N$.
\end{enumerate}
Indeed, if $a$ is as above, we claim that $q:=\tp_{\forall_m,\forall_n}^{M^\u}(a/A)$ is an heir of $p$.  By (1), $q$ is an extension of $p$.  To see that $q$ is an heir, fix a $\forall_n$-formula $\varphi(x,c)$ with parameters from $A$ and set $s:=\varphi(x,c)^q=\varphi(a,c)^{M^\u}$.  Suppose, towards a contradiction, that there is $\epsilon>0$ such that $|\varphi(x,b)^p-s|\geq \epsilon$ for all $b\in N$.  It follows that $|\varphi(x,b)-s|^p\geq \epsilon$ for all $b\in N$.  Since $|\varphi(x,b)-s|$ is logically equivalent to a $\forall_{n+1}$, whence, by (2), $|\varphi(a,c)^{M^\u}-s|\geq \frac{\epsilon}{2}$, leading to a contradiction.

Suppose now, towards a contradiction, that no such $a\in M^\u$ exists.  By countable saturation, it follows that there are:
\begin{itemize}
\item a $\forall_m$-formula $\psi(x)$ with parameters from $N$ such that $\psi(x)^p=0$, 
\item a $\delta>0$, and 
\item formulae $\varphi_1(x,c_1),\ldots,\varphi_k(x,c_k)$ with parameters from $A$ as in (2)
\end{itemize}
such that, for any $a\in M^\u$, if $\psi(a)<\delta$, then $\varphi_i(a,c_i)<\frac{\epsilon}{2}$ for some $i=1,\ldots,k$. 

In other words, 
$$\left(\sup_x\min\left(\delta\dotminus \psi(x),\min_{1\leq i\leq k}\left(\varphi_i(x,c_i)\dotminus \frac{\epsilon}{2}\right)\right)\right)^{M^\u}=0.$$  Consequently, $$\left(\inf_{y_1}\cdots\inf_{y_k}\sup_x\min\left(\delta\dotminus \psi(x),\min_{1\leq i\leq k}\left(\varphi_i(x,y_i)\dotminus \frac{\epsilon}{2}\right)\right)\right)^{M^\u}=0,$$ and thus, since the inclusion $N\subseteq M^\u$ is downward $\exists_{m+2}$, we have 
$$\left(\inf_{y_1}\cdots\inf_{y_m}\sup_x\min\left(\delta\dotminus \psi(x),\min_{1\leq i\leq m}\left(\varphi_i(x,y_i)\dotminus \frac{\epsilon}{2}\right)\right)\right)^N=0.$$ Set $\eta:=\min(\delta,\frac{\epsilon}{2})$ and take $d_1,\ldots,d_k\in N$ such that $$\left(\sup_x\min\left(\delta\dotminus \psi(x),\min_{1\leq i\leq k}\left(\varphi_i(x,d_i)\dotminus \frac{\epsilon}{2}\right)\right)\right)^N<\eta;$$ since the inclusion $N\subseteq M^\u$ is upward $\forall_{m+1}$, we have $$\left(\sup_x\min\left(\delta\dotminus \psi(x),\min_{1\leq i\leq k}\left(\varphi_i(x,d_i)\dotminus \frac{\epsilon}{2}\right)\right)\right)^{M^\u}<\eta.$$  Take $a\in M^\u$ realizing $p$.  Then $\psi(a)^{M^\u}=\psi(x)^p=0$, whence, since $\eta\leq \delta$, we have $\min_{1\leq i\leq k}(\varphi_i(x,d_i)\dotminus \frac{\epsilon}{2})^{M^\u}<\eta\leq \frac{\epsilon}{2}$.  Choosing $i$ such that $(\varphi_i(a,d_i)\dotminus \frac{\epsilon}{2})^{M^\u}<\eta$, we get that $\varphi_i(x,d_i)^p=\varphi_i(a,d_i)^{M^\u}<\epsilon$, a contradiction.
\end{proof}

We will be interested in the following special case of Theorem \ref{heir}:

\begin{cor}\label{specialcase}
Suppose that $M$ is a separable $L$-structure.  Fix a separable substructure $N$ of $M^\u$ such that the inclusion $N\subseteq M^\u$ is downward $\exists_{3}$.  Fix also $p\in S_{\forall_1}(N)$.  Then for any separable parameter set $A$ with $N\subseteq A\subseteq M^\u$, there is $q\in S_{\forall_1,\forall_0}(N,A)$ that is an heir of $p$.
% Fix separable subsets $N$ and $A$ of $M^\u$ with $N\subseteq A\subseteq M^\u$ and $p\in S_{\forall_1}(N)$.  Suppose further that the inclusion $N\subseteq M^\u$ is downward $\exists_{3}$ and upward $\forall_{2}$.  Then there is $q\in S_{\forall_1,\operatorname{qf}}(N,A)$ that is an heir of $p$.
\end{cor}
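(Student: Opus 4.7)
The plan is essentially trivial: this is the special case of Theorem \ref{heir} obtained by setting $m = 1$ and $n = 0$. First I would check that the hypotheses match: we need $n < m$, which is $0 < 1$, and we need the inclusion $N \subseteq M^\u$ to be downward $\exists_{m+2} = \exists_3$, which is exactly the hypothesis of the corollary. Since $p \in S_{\forall_1}(N) = S_{\forall_m}(N)$, Theorem \ref{heir} directly yields $q \in S_{\forall_m, \forall_n}(N,A) = S_{\forall_1, \forall_0}(N,A)$ that is an heir of $p$, which is exactly the conclusion we want.

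The only thing to double-check is that the indexing conventions line up with what one wants to invoke in later sections: the ambient fragment is $\forall_m = \forall_1$, and the fragment used for the heir condition over the larger parameter set $A$ is $\forall_n = \forall_0$ (i.e., quantifier-free conditions). There are no additional obstacles, since every step of the argument inside the proof of Theorem \ref{heir} goes through without modification for these particular values.

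Hence the proof reduces to one sentence: apply Theorem \ref{heir} with $m = 1$ and $n = 0$.
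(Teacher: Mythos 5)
Your proposal is correct and matches the paper's treatment exactly: the corollary is stated in the paper as the special case $m=1$, $n=0$ of Theorem \ref{heir}, with no separate proof given, and your verification that the hypotheses ($n<m$ and downward $\exists_{m+2}=\exists_3$) line up is precisely the check required.
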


\begin{defn}
Given a fragment $\Delta$ and an $L$-structure $M$, we set $$\Th_\Delta(M):=\{\sigma \ : \ \sigma \text{ is a nonnegative $L$-sentence from }\Delta \text{ and }\sigma^M=0\}.$$  If $N$ is another $L$-structure, we write $N\models \Th_\Delta(M)$ if $\sigma^N=0$ for all $\sigma\in \Th_\Delta(M)$.
\end{defn}

We now prove a result connecting small quantifier-fragments of theories of structures with the existence of embeddings as in the previous theorem.

\begin{prop}\label{fragmentsandembeddings}
Suppose that $M$ and $N$ are separable $L$-structures and $m\in \mathbb N$.  Then there is an embedding $i:N\hookrightarrow M^\u$ that is downwards $\exists_{m+2}$ if and only if $M\models \operatorname{Th}_{\exists_{m+3}}(N)$.
\end{prop}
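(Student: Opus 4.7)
The plan is to use the preceding lemma to replace ``downward $\exists_{m+2}$'' with the dual notion ``upward $\forall_{m+2}$'' throughout, and then to argue the two directions separately.

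For the forward direction, I would suppose $i:N\hookrightarrow M^\u$ is upward $\forall_{m+2}$ and pick $\sigma\in \Th_{\exists_{m+3}}(N)$, say $\sigma = \inf_{\vec y}\psi(\vec y)$ with $\psi$ a nonnegative $\forall_{m+2}$-formula.  Given $\epsilon > 0$, choose $\vec a\in N$ with $\psi(\vec a)^N < \epsilon$.  Then $\psi(\vec y)\dotminus \epsilon$ is logically equivalent to a nonnegative $\forall_{m+2}$-formula (push $\dotminus \epsilon$ inside each $\sup$ and $\inf$) and vanishes on $\vec a$, so the upward $\forall_{m+2}$ property gives $\psi(i(\vec a))^{M^\u}\leq \epsilon$, whence $\sigma^{M^\u}\leq \epsilon$.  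Letting $\epsilon\to 0$ and using $M\equiv M^\u$ yields $\sigma^M = 0$.

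For the converse, enumerate $N$ as $\{a_n:n\in \mathbb N\}$ and consider the partial type $\Sigma(\vec x)$ in variables $\vec x = (x_n)_{n\in\mathbb N}$ consisting of the atomic conditions $|\psi(x_{n_1},\ldots,x_{n_k}) - \psi(a_{n_1},\ldots,a_{n_k})^N| = 0$, together with the conditions $\varphi(x_{n_1},\ldots,x_{n_k}) = 0$ whenever $\varphi$ is a nonnegative $\forall_{m+2}$-formula with $\varphi(a_{n_1},\ldots,a_{n_k})^N = 0$.  A realization $(b_n)$ of $\Sigma$ in $M^\u$ produces, via $i(a_n):=b_n$, an embedding of $N$ into $M^\u$ that is upward $\forall_{m+2}$, hence downward $\exists_{m+2}$.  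By countable saturation of $M^\u$, it suffices to establish approximate finite satisfiability of $\Sigma$.

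Given a finite subset of $\Sigma$, each condition is of the form ``nonnegative formula equals $0$,'' with the formula either quantifier-free (hence trivially $\forall_{m+2}$) or explicitly $\forall_{m+2}$.  Taking the maximum produces a single nonnegative $\forall_{m+2}$-formula $\Phi(x_{n_1},\ldots,x_{n_k})$ with $\Phi(a_{n_1},\ldots,a_{n_k})^N = 0$, using that $\max$ preserves $\forall_{m+2}$ via the identities $\max(\sup_y f,\sup_z g) = \sup_{y,z}\max(f,g)$ and $\max(\inf_y f,\inf_z g) = \inf_{y,z}\max(f,g)$.  Then $\inf_{\vec x}\Phi(\vec x)$ is a nonnegative $\exists_{m+3}$-sentence vanishing on $N$; it therefore lies in $\Th_{\exists_{m+3}}(N)$, and by hypothesis vanishes on $M$, hence on $M^\u$.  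For each $\epsilon > 0$ this furnishes $\vec b\in M^\u$ with $\Phi(\vec b)^{M^\u} < \epsilon$, approximately realizing the chosen finite subset.

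The main technical nuisance is the prenex bookkeeping needed to show that the class of $\forall_{m+2}$-formulas is closed under $\max$ and under $t\mapsto t\dotminus \epsilon$; these are routine continuous-logic manipulations.  The essential content of the proof lies in the hypothesis $M\models \Th_{\exists_{m+3}}(N)$, which is precisely what allows the transfer of approximate finite satisfiability of the natural partial type from $N$ to $M^\u$.
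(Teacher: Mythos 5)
Your argument is correct and follows essentially the same route as the paper's: both directions exploit the duality between downward $\exists_{m+2}$ and upward $\forall_{m+2}$, and for the converse both reduce to approximate finite satisfiability of a type over (a countable dense subset of) $N$, noting that a finite conjunction of the conditions, prefixed by $\inf$, is a nonnegative $\exists_{m+3}$-sentence in $\Th_{\exists_{m+3}}(N)$. The only cosmetic difference is that you parameterize the conditions securing upward $\forall_{m+2}$ as nonnegative $\forall_{m+2}$-formulae vanishing on the tuple, whereas the paper records the dual downward $\exists_{m+2}$ conditions as $\epsilon\dotminus\varphi$ for $\exists_{m+2}$-formulae $\varphi$ with $\varphi^N\geq\epsilon$; these are equivalent bookkeeping choices.
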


\begin{proof}
First suppose that a downwards $\exists_{m+2}$-embedding $i:N\hookrightarrow M^\u$ exists and $\sigma$ is a nonnegative $\exists_{m+3}$-sentence such that $\sigma^N=0$.  Write $\sigma=\inf_x\varphi(x)$ with $\varphi$ a $\forall_{m+2}$-formula.  Fix $\epsilon>0$ and take $a\in N$ such that $\varphi(a)<\epsilon$.  Then $(\varphi(a)\dminus \epsilon)^N=0$, and since this formula is equivalent to a $\forall_{m+2}$-formula and $i$ is upwards $\forall_{m+2}$, we have that $(\varphi(i(a))\dminus \epsilon)^{M^\u}=0$.  Consequently, $(\inf_x(\varphi(x)\dminus\epsilon))^M=0$; since $M$ is arbitrary, we have that $\sigma^M=0$, as desired.

Conversely, suppose that $M\models \operatorname{Th}_{\exists_{m+3}}(N)$.  Let $L_N$ be the language obtained by adding constants $c_a$ for $a\in N$.  Set $\Gamma$ to be the following collection of $L_N$ sentences:
\begin{enumerate}
    \item $\theta(c_{a_1},\ldots,c_{a_n})$, where $\theta$ is a nonnegative quantifier-free formula and $\theta(a_1,\ldots,a_n)^N=0$;
    \item $\epsilon\dminus \varphi(c_{a_1},\ldots,c_{a_n})$, where $\varphi$ is a $\exists_{m+2}$-formula with $\varphi(a_1,\ldots,a_n)^N\geq \epsilon$
\end{enumerate}
If $\Gamma$ can be shown to be approximately finitely satisfiable in an expansion of $M$, then by countable saturation there is an expansion of $M^\u$ which is a model of $\Gamma$, and this yields the desired embedding.  So suppose $\theta_1,\ldots,\theta_k$ are as in (1) and $\epsilon_j\dminus \varphi_j$, $j=1,\ldots,l$, are as in (2).  Then $$\inf_x\left(\max\left(\max_{i=1,\ldots,k}\theta_i(x),\max_{j=1,\ldots,l}(\epsilon_j\dminus \varphi_j(x)\right)\right)$$ is equivalent to an $\exists_{m+3}$-sentence that evaluates to $0$ in $N$, whence, by assumption, also evaluates to $0$ in $M$.  This completes the proof.
\end{proof}

% \begin{prop}
% Suppose that $M\models \operatorname{Th}_{\exists_{m+3}}(N)$.  Then there is an embedding $N\hookrightarrow M^\u$ that is downwards $\exists_{m+2}$ and upwards $\forall_{m+1}$.
% \end{prop}

% I think that a converse holds:  If there is a downwards $\exists_{m+2}$ embedding $N\hookrightarrow M^\u$, then $M\models \operatorname{Th}_{\exists_{m+3}}(N)$.

Combining Theorem \ref{heir} and Proposition \ref{fragmentsandembeddings}, we arrive at:

\begin{cor}\label{whatwereallyuse}
Suppose that $M$ is a separable $L$-structure.  Fix a separable substructure $N$ of $M^\u$ such that $M\models \operatorname{Th}_{\exists_{m+3}}(N)$.  Fix also $p\in S_{\forall_m}^{M^\u}(N)$.  Then for any separable parameter set $A$ with $N\subseteq A\subseteq M^\u$ and any $n<m$, there is $q\in S_{\forall_m,\forall_n}^{M^\u}(N,A)$ that is an heir of $p$.  In particular, if $M\models \Th_{\exists_4}(N)$, then for any $p\in S_{\forall_1}^{M^\u}(N)$ and any separable parameter set $A$ with $N\subseteq A\subseteq M^\u$, there is $q\in S_{\forall_1,\forall_0}^{M^\u}(N,A)$ that is an heir of $p$.
% Fix separable subsets $N$ and $A$ of $M^\u$ with $N\subseteq A\subseteq M^\u$, $p\in S_{\forall m}(N)$, and $n< m$.  Further suppose that $M\models \operatorname{Th}_{\exists_{m+3}}(N)$.  Then there is $q\in S_{\forall_m,\forall_n}(N,A)$ that is an heir of $p$.
\end{cor}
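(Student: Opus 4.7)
The plan is to obtain Corollary~\ref{whatwereallyuse} as a straightforward combination of Theorem~\ref{heir} with Proposition~\ref{fragmentsandembeddings}. First I would invoke Proposition~\ref{fragmentsandembeddings} to convert the hypothesis $M \models \Th_{\exists_{m+3}}(N)$ into the existence of an embedding $i \colon N \hookrightarrow M^\u$ that is downward $\exists_{m+2}$. This supplies exactly the missing hypothesis of Theorem~\ref{heir}: after relabeling $N$ as its image $i(N)$ inside $M^\u$, the inclusion $N \subseteq M^\u$ is now downward $\exists_{m+2}$, and the theorem directly produces the desired heir $q \in S^{M^\u}_{\forall_m, \forall_n}(N, A)$ of $p$ for any $n < m$ and any separable $A$ with $N \subseteq A \subseteq M^\u$.

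The only nontrivial bookkeeping point is that $p$ and $A$ are nominally attached to the original inclusion of $N$ into $M^\u$ rather than to the new embedding $i$ supplied by Proposition~\ref{fragmentsandembeddings}. I would resolve this using the countable saturation of $M^\u$ (available since $\u$ is countably incomplete): first transport $p$ through $i$ to a type $i_\ast p \in S^{M^\u}_{\forall_m}(i(N))$ and realize it in $M^\u$, then enlarge $A$ to a separable parameter set inside $M^\u$ containing $i(N)$ and playing the same role with respect to $i(N)$ that $A$ did with respect to $N$. Since Theorem~\ref{heir} and Proposition~\ref{fragmentsandembeddings} already do the substantive work, the main obstacle is simply keeping this translation tidy; no genuinely new mathematical content is needed.

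Finally, the ``in particular'' assertion is the specialization $m = 1$ and $n = 0$, for which $m + 3 = 4$, and requires no further argument.
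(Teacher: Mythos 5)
Your high-level plan — combine Proposition~\ref{fragmentsandembeddings} with Theorem~\ref{heir} — is exactly what the paper does (the paper's proof consists of the single phrase ``Combining Theorem~\ref{heir} and Proposition~\ref{fragmentsandembeddings}''). You are right to notice that the hypothesis $M\models\Th_{\exists_{m+3}}(N)$ only yields \emph{some} downward $\exists_{m+2}$ embedding $i\colon N\hookrightarrow M^\u$, which need not be the ambient inclusion to which $p$ and $A$ are a priori attached; the paper glosses over this.

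However, the ``transport'' step you propose does not repair the mismatch, for two reasons. First, it is not clear that $i_\ast p$ is even realized in $M^\u$: $p$ is the $\forall_m$-type of some $c\in M^\u$ over the original copy of $N$, and realizing the corresponding conditions over $i(N)$ amounts to knowing that certain nonnegative $\exists_{m+2}$-formulas with parameters $\vec a\in N$ that vanish in $(M^\u,N)$ also vanish in $(M^\u,i(N))$; neither the downward $\exists_{m+2}$ property of $i$ (which compares $i(N)$ to $M^\u$, not one copy of $N$ inside $M^\u$ to another) nor the hypothesis $M\models\Th_{\exists_{m+3}}(N)$ (which involves sentences rather than formulas with parameters, and goes in the wrong direction) gives this. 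Second, even granting the transport, the resulting heir lives in $S^{M^\u}_{\forall_m,\forall_n}(i(N),A')$ for a new parameter set $A'\supseteq i(N)$, not in $S^{M^\u}_{\forall_m,\forall_n}(N,A)$ as the corollary nominally asserts, so you have not concluded the statement as written.

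The intended resolution is simpler and the opposite of what you propose: one should \emph{not} try to return to the original inclusion, but rather read the corollary, as the paper implicitly does, as applying to the inclusion furnished by Proposition~\ref{fragmentsandembeddings}. That is, one first fixes the downward $\exists_{m+2}$ embedding $i\colon N\hookrightarrow M^\u$, identifies $N$ with $i(N)$, and only then takes $p\in S^{M^\u}_{\forall_m}(N)$ and the separable $A$ with $N\subseteq A\subseteq M^\u$ relative to this embedding; Theorem~\ref{heir} then applies directly. This is also how the corollary is used in the proof of Theorem~A: $N$ is only an abstract embeddable factor there, so one must first place it inside $\R^\u$ via Proposition~\ref{fragmentsandembeddings}, after which Brown's fact and Corollary~\ref{commutantcorollary} are invoked for that particular copy. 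With this reading, the ``in particular'' clause is, as you say, just the case $m=1$, $n=0$.
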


\section{Proof of Theorem A}

In this section, we apply the abstract results from the previous section to the setting of II$_1$ factors.  Throughout this section, $L$ is the language of tracial von Neumann algebras and $T$ is the universal theory of embeddable tracial von Neumann algebras.  All structures considered in this section will be models of $T$.

\begin{lem}
Suppose that $M$ and $N$ are separable with $N\subseteq M^\u$.  Suppose also that $a,b\in M^\u$ are such that $a\in Z(N'\cap M^\u)$ and $\tp_{\forall_1}^{M^\u}(a/N)=\tp_{\forall_1}^{M^\u}(b/N)$.  Then $b\in Z(N'\cap M^\u)$.
\end{lem}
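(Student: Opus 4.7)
The plan is to verify the two defining conditions for $b \in Z(N' \cap M^\u)$ separately: that $b$ commutes with every element of $N$, and that $b$ commutes with every element of $N' \cap M^\u$.

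The first condition is immediate, since for any $n \in N$ the expression $\|[x, n]\|_2$ is quantifier-free in $x$ with parameter $n$, hence a $\forall_1$-formula over $N$. Since $a \in Z(N' \cap M^\u) \subseteq N' \cap M^\u$, we have $\|[a, n]\|_2^{M^\u} = 0$, and the hypothesis $\tp_{\forall_1}^{M^\u}(a/N) = \tp_{\forall_1}^{M^\u}(b/N)$ then forces $\|[b, n]\|_2^{M^\u} = 0$, so $b \in N' \cap M^\u$.

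For the second condition, the plan is to repackage centrality of $a$ as a single $\forall_1$-formula with parameters from $N$. Fix $\epsilon > 0$. The key claim is that there exist a finite $F \subseteq N$ and $\delta > 0$ such that for every $y$ in the operator-norm unit ball of $M^\u$,
$$\max_{n \in F}\|[y, n]\|_2 \leq \delta \ \Longrightarrow\ \|[a, y]\|_2 \leq \epsilon.$$
If no such $F,\delta$ existed, then, fixing a countable $\|\cdot\|_2$-dense sequence $\{n_k\}$ in the unit ball of $N$, the partial type asserting $\|y\| \leq 1$, $\|[y, n_k]\|_2 = 0$ for all $k$, and $\|[a, y]\|_2 \geq \epsilon$ would be finitely approximately satisfiable and hence realized by countable saturation of $M^\u$; the resulting $y$ would lie in $N' \cap M^\u$ yet fail to commute with $a$, contradicting $a \in Z(N' \cap M^\u)$. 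Granting the claim, the displayed implication is the statement that $\varphi(a)^{M^\u} = 0$ for the $\forall_1$-formula (with parameters in $F$)
$$\varphi(x) := \sup_y \min\left(\|[x, y]\|_2 \dotminus \epsilon,\ \delta \dotminus \max_{n \in F}\|[y, n]\|_2\right),$$
where $y$ ranges over the unit ball. The hypothesis then gives $\varphi(b)^{M^\u} = 0$; evaluating at any unit-ball $y \in N' \cap M^\u$ kills the second term of the min (it equals $\delta > 0$), forcing $\|[b, y]\|_2 \leq \epsilon$. Letting $\epsilon \to 0$ and rescaling yields $[b, y] = 0$ for every $y \in N' \cap M^\u$. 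I expect the main obstacle to be the saturation argument producing the uniform $F,\delta$ from the pointwise centrality of $a$; once that is in hand, the $\forall_1$-type hypothesis does the remaining work.
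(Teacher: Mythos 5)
Your proof is correct and follows essentially the same approach as the paper: you first use the quantifier-free ($\forall_0$) part of the type to get $b\in N'\cap M^\u$, then use countable saturation of $M^\u$ to extract a finite $F\subseteq N$ and $\delta>0$ witnessing the almost-commutation implication, encode this as a $\forall_1$-formula $\varphi(x)$ with parameters in $N$ satisfying $\varphi(a)^{M^\u}=0$, transfer it to $b$ via the type hypothesis, and let $\epsilon\to 0$. (Incidentally, your $\max_{n\in F}$ is the correct choice; the paper's displayed formula appears to have $\min_i$ where $\max_i$ is intended, since saturation only yields the implication when \emph{all} of the $\|[y,e_i]\|_2$ are small.)
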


\begin{proof}
Since $\tp_{\forall_0}^{M^\u}(a/N)=\tp_{\forall_0}^{M^\u}(b/N)$, we have $b\in N'\cap M^\u$.  Now fix $\epsilon>0$.  By countable saturation, there are $e_1,\ldots,e_n\in N$ and $\delta>0$ such that, for all $c\in M^\u$, if $\|[c,e_i]\|_2<\delta$ for all $i=1,\ldots,n$, then $\|[c,a]\|_2<\epsilon$.  Consequently, 
$$\sup_x\min\left(\delta\dotminus min_i\|[x,e_i]\|_2, \|[x,y]\|_2\dotminus \epsilon\right)$$ belongs to $\tp_{\forall_1}^{M^\u}(a/N)$, whence it also belongs to $\tp_{\forall_1}^{M^\u}(b/N)$.  It follows that $b\in Z(N'\cap M^\u)$.  So, if $c\in N'\cap M^\u$, then $\|[b,c\|_2\leq \epsilon$.  Since $\epsilon$ was arbitrary, it follows that $[b,c]=0$, and thus $b\in Z(N'\cap M^\u)$, as desired.
\end{proof}

\begin{cor}\label{commutantcorollary}
Suppose that $N\subseteq P\subseteq M^\u$, $P'\cap M^\u$ is a factor, and every element of $S_{\forall_1}^{M^\u}(N)$ admits an heir to $S_{\forall_1,\forall_0}^{M^\u}(N,P)$.  Then $N'\cap M^\u$ is a factor.
\end{cor}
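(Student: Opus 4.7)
The plan is to take any $a\in Z(N'\cap M^\u)$ and show that $a$ is a scalar. To do this, I will produce a companion $b\in M^\u$ sharing its $\forall_1$-type over $N$ with $a$, but which additionally lies in $P'\cap M^\u$. Factoriality of $P'\cap M^\u$ will then force $b$ to be a scalar, and the shared quantifier-free type of $a$ and $b$ will transport that scalarity back to $a$.

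Concretely, set $p:=\tp_{\forall_1}^{M^\u}(a/N)$ and, by hypothesis, let $q\in S_{\forall_1,\forall_0}^{M^\u}(N,P)$ be an heir of $p$; pick $b\in M^\u$ realizing $q$. Since any heir extends the underlying type (as noted in the verification of condition~(1) in the proof of Theorem~\ref{heir}), $b$ and $a$ have the same $\forall_1$-type over $N$, so the preceding lemma gives $b\in Z(N'\cap M^\u)$. To upgrade this to $b\in P'\cap M^\u$, I apply the heir condition to the quantifier-free (and hence $\forall_0$) formula $\varphi(x,y):=\|[x,y]\|_2$: for each $c\in P$ and each $\varepsilon>0$, there is $e\in N$ with $|\varphi(x,e)^p-\varphi(x,c)^q|<\varepsilon$. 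But $\varphi(x,e)^p=\|[a,e]\|_2=0$, since $a\in N'\cap M^\u$ and $e\in N$, so $\|[b,c]\|_2<\varepsilon$; letting $\varepsilon\to 0$ yields $[b,c]=0$ for every $c\in P$.

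Since $N\subseteq P$ implies $P'\cap M^\u\subseteq N'\cap M^\u$, the facts that $b$ lies in $P'\cap M^\u$ and commutes with everything in $N'\cap M^\u$ package into $b\in Z(P'\cap M^\u)$, which by hypothesis equals $\mathbb{C}\cdot 1$. In particular $\|b-\tau(b)\cdot 1\|_2=0$; and since $\|x-\tau(x)\cdot 1\|_2$ is quantifier-free and $a,b$ share their $\forall_1$-type, we conclude $\|a-\tau(a)\cdot 1\|_2=0$, so $a$ is a scalar.

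I do not foresee a genuine obstacle, since the preceding lemma and Corollary~\ref{whatwereallyuse} were designed precisely for this application. The only care required is to verify that the two formulas invoked, the commutator seminorm $\|[x,y]\|_2$ and the scalar-defect $\|x-\tau(x)\cdot 1\|_2$, are indeed quantifier-free in the language of tracial von Neumann algebras, so that the heir-transfer step for the former and the type-preservation step for the latter both apply as intended.
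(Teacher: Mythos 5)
Your argument is correct and is essentially the paper's own proof: realize an heir $q$ of $p=\tp_{\forall_1}(a/N)$ by some $b$, use the heir condition on the quantifier-free commutator formula to place $b$ in $P'\cap M^\u$, use the preceding lemma to see $b$ commutes with all of $N'\cap M^\u\supseteq P'\cap M^\u$, conclude $b\in Z(P'\cap M^\u)=\mathbb C$, and transfer scalarity back to $a$ along the shared $\forall_1$-type. The only cosmetic deviation is that the paper transfers scalarity via the condition $d(x,\lambda\cdot 1)=0$ (with $\lambda\cdot 1\in N$) rather than the parameter-free quantifier-free formula $\|x-\tau(x)\cdot 1\|_2$; both work for the same reason.
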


\begin{proof}
Take $a\in Z(N'\cap M^\u)$ and let $p:=\tp_{\forall_1}(a/N)$.  Let $q\in S_{\forall_1,\forall_0}(N,P)$ be an heir of $p$.  Let $b\in M^\u$ satisfy $q$.  By the heir property, $b\in P'\cap M^\u$.  If $c\in P'\cap M^\u$, then $c\in N'\cap M^\u$, whence, by the previous lemma, $[b,c]=0$.  It follows that $b\in Z(P'\cap M^\u)=\mathbb C$.  So $b=\lambda\cdot 1$ for some $\lambda \in \mathbb C$, so $d(x,\lambda\cdot 1)=0$ belongs to $q$, whence it also belongs to $p$, and thus $a=\lambda\cdot 1$, as desired.
\end{proof}

Recall the following fact of Nate Brown mentioned in the introduction:

\begin{fact}
For every separable $N\subseteq \R^\u$, there is a separable $P\subseteq \R^\u$ with $N\subseteq P$ such that $P'\cap \R^\u$ is a factor.
\end{fact}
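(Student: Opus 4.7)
The plan is to build $P$ as the ultraweak closure of a countable increasing union $N = P_0 \subseteq P_1 \subseteq \cdots \subseteq \R^\u$ of separable von Neumann subalgebras, where at each stage we adjoin carefully chosen unitaries that collectively prevent any non-scalar central element from surviving in the relative commutant of the limit algebra.

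The core one-step construction is the following: if $Q \subseteq \R^\u$ is separable and $z \in \R^\u$ is non-scalar, then because $\R^\u$ is a factor there is a unitary $w \in \R^\u$ with $[w,z] \neq 0$, and adjoining $w$ to $Q$ ejects $z$ from $Q' \cap \R^\u$. To handle infinitely many potential central elements at once, I would work with types over the separable parameter set $P_n$: the type space $S_1^{\R^\u}(P_n)$ is a compact metrizable space, so for each $\epsilon > 0$ there are only countably many $\epsilon$-separated types. At stage $n$ one enumerates a countable, metrically dense family of types corresponding to ``almost-centralizing non-scalar'' realizations over $P_n$, and for each such type picks a witness unitary $w \in \R^\u$ that quantitatively fails to commute with every realization of that type. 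Let $P_{n+1}$ be the separable von Neumann subalgebra of $\R^\u$ generated by $P_n$ together with all these countably many witnesses.

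The main obstacle is that the relative commutant $P_n' \cap \R^\u$ is in general non-separable, so one cannot literally list its non-scalar central elements and kill them one at a time. The workaround is the passage to types: two elements of $P_n' \cap \R^\u$ realizing the same quantifier-free $1$-type over $P_n$ are interchangeable for the commutation question, and only separable data are needed per stage. The remaining difficulty is the limiting diagonal argument: for any $z \in Z(P' \cap \R^\u)$ one has $z \in P_n' \cap \R^\u$ for every $n$, and $z$ commutes with every witness chosen at every stage. By countable saturation of $\R^\u$ together with the quantitative choice of witnesses, this forces $\|z - \lambda \cdot 1\|_2 < \epsilon$ for every $\epsilon > 0$ (with $\lambda = \lambda(\epsilon) \in \mathbb{C}$), and hence $z$ is a scalar. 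Setting $P = \overline{\bigcup_n P_n}$ then yields a separable von Neumann subalgebra of $\R^\u$ containing $N$ with $Z(P' \cap \R^\u) = \mathbb{C}$, i.e., $P' \cap \R^\u$ is a factor, as required.
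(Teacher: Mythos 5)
The paper does not prove this Fact; it quotes it verbatim as Brown's Theorem~6.9, and the argument in the cited source rests on Brown's analysis of the convex-like metric structure on the space of embeddings of a fixed separable algebra into $\R^\u$ modulo unitary conjugacy, together with the characterization of factorial relative commutants as the extreme points of that structure. Your proposal is therefore a genuinely different route -- a direct, iterated, ``kill the center'' construction -- but it has a fatal gap at its central step.

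Namely, you require, for each type $p\in S_1(P_n)$ realized by some non-scalar element of $P_n'\cap\R^\u$, a single unitary $w$ and a $\delta>0$ such that $\|[w,z]\|_2\geq\delta$ for \emph{every} realization $z$ of $p$. No such universal witness exists in general. Already at the first stage with $N=P_0=\mathbb{C}$, let $z$ be any non-scalar unitary and let $w$ be any unitary. A maximal abelian subalgebra of $\R^\u$ containing $w$ is diffuse, so it contains a unitary $z'$ with the same spectral distribution as $z$; since separable abelian subalgebras of $\R^\u$ are hyperfinite and any two trace-preserving embeddings of a separable hyperfinite algebra into $\R^\u$ are unitarily conjugate, $z'$ realizes the same type as $z$ over $\mathbb{C}$ while $[w,z']=0$. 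Thus each witness you adjoin eliminates only the particular realization you chose, not the type class, and the closing saturation/diagonal argument at the limit has nothing to act on. (A secondary slip: $S_1(P_n)$ is compact metrizable only in the logic topology, whereas the $d$-metric implicit in ``$\epsilon$-separated'' induces a strictly finer and generally non-compact topology; and in any compact metric space an $\epsilon$-separated set is finite, not merely countable -- but this is minor next to the missing witness lemma.)
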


We are now able to prove the following more precise version of Theorem A:

\begin{thm}
Suppose that $N$ is an embeddable factor such that $\R\models \operatorname{Th}_{\exists_4}(N)$.  Then $N$ satisfies the FCEP.
\end{thm}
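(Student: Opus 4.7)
The plan is to combine the three main ingredients already at hand: Proposition \ref{fragmentsandembeddings} to realize $N$ as a subfactor of $\R^\u$, Brown's Fact to enlarge $N$ inside $\R^\u$ to a separable $P$ with factorial relative commutant, and finally the heir-to-factor transfer packaged in Corollary \ref{commutantcorollary}, with existence of heirs supplied by Corollary \ref{whatwereallyuse}.

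First, apply Proposition \ref{fragmentsandembeddings} with $m=1$: since $\R\models \Th_{\exists_4}(N)$, there is a downwards $\exists_3$ embedding $i:N\hookrightarrow \R^\u$. I claim this $i$ is already a witness to the FCEP for $N$. Identify $N$ with $i(N)\subseteq \R^\u$; since $i$ is an $L$-isomorphism onto its image, the hypothesis $\R\models \Th_{\exists_4}(N)$ persists for the subcopy $i(N)\subseteq \R^\u$, and $i(N)$ is a separable subfactor of $\R^\u$ because $N$ is.

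Next, invoke Brown's Fact to obtain a separable $P\subseteq \R^\u$ with $N\subseteq P$ and $P'\cap \R^\u$ a II$_1$ factor. Then apply Corollary \ref{whatwereallyuse} with $M=\R$, the given subfactor $N$, parameter set $A:=P$, and $m=1$: for every type $p\in S_{\forall_1}^{\R^\u}(N)$ there is a heir $q\in S_{\forall_1,\forall_0}^{\R^\u}(N,P)$. This is exactly the hypothesis needed by Corollary \ref{commutantcorollary}, whose remaining hypotheses ($N\subseteq P\subseteq \R^\u$ and factoriality of $P'\cap \R^\u$) have just been secured. Concluding with Corollary \ref{commutantcorollary} gives that $N'\cap \R^\u$ (i.e., $i(N)'\cap \R^\u$) is a factor, which is the FCEP for $N$.

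There is no essential obstacle: the proof is a bookkeeping exercise once the two corollaries of Section 2 have been proved and the corollary-Brown combination of Section 3 has been set up. The only small thing to verify is that the $\exists_4$-theoretic hypothesis is invariant under the embedding $i$, which is immediate since $N\cong i(N)$ as $L$-structures and $\Th_{\exists_4}$ depends only on the isomorphism type. Everything else in the argument is an application of results already in place.
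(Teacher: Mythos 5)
Your proof is correct and follows the same route as the paper's: Brown's Fact supplies $P$, Corollary \ref{whatwereallyuse} supplies the heirs, and Corollary \ref{commutantcorollary} converts the heir property into factoriality of $i(N)'\cap\R^\u$. You are in fact slightly more careful than the paper's two-line proof, since you explicitly invoke Proposition \ref{fragmentsandembeddings} first to produce the embedding $i:N\hookrightarrow\R^\u$, so that the inclusion $i(N)\subseteq\R^\u$ is downward $\exists_3$ --- which is the property the proof of Theorem \ref{heir} (underlying Corollary \ref{whatwereallyuse}) actually uses, and which does not persist merely because $\Th_{\exists_4}$ is isomorphism-invariant.
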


\begin{proof}
Fix $P$ as in the previous fact, so $N\subseteq P\subseteq \R^\u$ with $P'\cap \R^\u$ a factor.  The proof then follows from Corollary \ref{whatwereallyuse} and Corollary \ref{commutantcorollary}.
\end{proof}

\section{Proof of Theorem B}

Let (*) denote the statement:  the amalgamated free product of embeddable factors over a property (T) base is once again embeddable.

\begin{lem}\label{star}
Suppose that (*) holds.  Then whenever $N$ is a w-spectral gap subfactor of the e.c. embeddable factor $M$, then $(N'\cap M)'\cap M=N$.
\end{lem}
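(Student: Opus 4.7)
The inclusion $N\subseteq(N'\cap M)'\cap M$ is automatic, so the work is to establish the reverse inclusion. My plan is to combine hypothesis (*) with the e.c.\ property of $M$ to produce two copies of $M$ inside $M^\u$ that are free with amalgamation over $N$, and then to use the w-spectral gap of $N\subseteq M$ to rigidify the element $q$ against its free counterpart.

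Fix $q\in(N'\cap M)'\cap M$ and form the amalgamated free product $P:=M_1*_N M_2$, with canonical inclusions $i_1,i_2:M\hookrightarrow P$. By (*), $P$ is embeddable. Since $M=i_1(M)$ is an e.c.\ embeddable factor contained in the embeddable factor $P$, there is an embedding $\pi:P\hookrightarrow M^\u$ whose restriction to $i_1(M)$ is the diagonal embedding $M\hookrightarrow M^\u$. Writing $M':=\pi(i_2(M))$, injectivity of $\pi$ together with $i_1(M)\cap i_2(M)=N$ inside $P$ forces $M\cap M'=N$ inside $M^\u$, and $M$, $M'$ are free with amalgamation over $N$. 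Setting $\tilde q:=\pi(i_2(q))\in M'$, the goal reduces to proving $\tilde q=q$ in $M^\u$, which places $q$ in $M\cap M'=N$.

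To identify $\tilde q$ with $q$, I would invoke the w-spectral gap of $N\subseteq M$ in its standard formulation $N'\cap M^\u=(N'\cap M)^\u$. An ultralimit argument first extends the commutation $q\in(N'\cap M)'$ from $N'\cap M$ to the ultrapower $(N'\cap M)^\u$, and then via w-spectral gap to the full relative commutant $N'\cap M^\u$; the same reasoning, applied inside the copy $M'$, gives that $\tilde q$ commutes with $N'\cap M^\u$ computed via $M'$. The freeness of $M$ and $M'$ over $N$ is then exploited to show that the difference $\tilde q-q$ centralises enough of $M^\u$ to be forced into the amalgamation $M\cap M'=N$. Having localised $\tilde q-q$ to $N$, one finishes by noting $E_N(q)=E_N(\tilde q)$ (since $\pi$ fixes $N$ and $i_1,i_2$ agree on $N$), whence $E_N(\tilde q-q)=0$; combined with $\tilde q-q\in N$ this gives $\tilde q-q=0$.

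The principal obstacle is the passage from the w-spectral gap of the single inclusion $N\subseteq M$ to a rigidity statement inside the amalgamated free product picture realised in $M^\u$: one must show that the two copies of $M$ over $N$ are sufficiently transverse modulo $N$ that $\tilde q-q$ is trapped inside $N$. This will likely require approximating the freeness relations by finite commutation-and-orthogonality conditions and playing them off against the finite approximate-commutation witnesses supplied by w-spectral gap. The rest of the argument --- forming the AFP, applying (*) for embeddability, and using e.c.\ to land $P$ inside $M^\u$ --- is routine.
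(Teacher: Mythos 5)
The paper does not give a proof; it cites \cite{spectralgap}, where the result is proved without the embeddability restriction, noting that (*) is exactly what is needed to run that argument inside the embeddable class. Your setup matches the actual approach: form $P := M *_N M$, note $P$ is embeddable by (*), use the e.c.\ property to embed $P$ into $M^\u$ over the first copy of $M$ (writing $\theta$ for the induced embedding of the second copy), and use w-spectral gap to upgrade the commutation of $q$ with $N'\cap M$ to commutation with $N'\cap M^\u = (N'\cap M)^\u$. But there is a genuine gap at the step you yourself flag as the principal obstacle, and the route you sketch there does not lead to a proof.

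Your plan to ``localise $\tilde q - q$ to $N$'' is not a reduction; it is a restatement of the goal. Inside $M *_N \theta(M)$ one has $\tilde q - q = (\tilde q - E_N(\tilde q)) - (q - E_N(q))$ with $q - E_N(q)\in M\ominus N$ and $\tilde q - E_N(\tilde q)\in\theta(M)\ominus N$ lying in orthogonal pieces, so $\tilde q - q\in N$ forces both of these to vanish separately, which \emph{is} the conclusion $q\in N$. Moreover, the auxiliary claim that $\tilde q$ commutes with $N'\cap M^\u$ is dubious --- w-spectral gap of $N\subseteq M$ concerns $N'\cap M^\u$, not a relative commutant of $N$ in an ultrapower of the second copy, which does not even sit naturally inside $M^\u$ --- and it is in any case not needed.

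The missing idea is a rigidity computation in the free product, applied directly to $q$. Set $q_0 := q - E_N(q)\in M\ominus N$. For any self-adjoint $v\in N'\cap M$ with $\tau(v)=0$, one has $\theta(v)\in\theta(M)\ominus N$, and since $q$ commutes with $N'\cap\theta(M)=\theta(N'\cap M)\subseteq N'\cap M^\u$, one gets $[q_0,\theta(v)]=0$. Freeness with amalgamation over $N$ gives $\tau(\theta(v)^* q_0^* \theta(v) q_0)=0$ (an alternating word in $M\ominus N$ and $\theta(M)\ominus N$), so the cross term in $\|q_0\theta(v)-\theta(v)q_0\|_2^2$ vanishes and commutation forces $\|q_0\theta(v)\|_2^2+\|\theta(v)q_0\|_2^2=0$, i.e.\ $q_0\theta(v)=0$. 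Choosing such a $v$ with trivial kernel --- possible once one knows $N'\cap M\neq\mathbb C$, which follows from $M$ being e.c.\ embeddable (so $N'\cap M^\u$ contains a copy of $\R$) combined with w-spectral gap --- yields $q_0=0$, hence $q\in N$. This ``commutation plus freeness forces the product to vanish'' mechanism is what your proposal is missing.
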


\begin{proof}
In \cite{spectralgap}, this was proven without a restriction to embeddable factors.  The proof goes through in the embeddable case if one assumes (*) holds.
\end{proof}

Recall that if $N$ is a property (T) factor, then $N$ has a \textbf{Kazhdan set}, which is a finite subset $F$ of $N$ that satisfies the following property:  there is a $K > 0$ such that for any II$_1$ factor $M$ containing $N$ as a subfactor, any $b\in M_1$, and any sufficiently small $\eta>0$, if $\|[a,b]\|_2<\eta$ for all $a\in F$, then there is $c\in N'\cap M$ such that $\|b-c\|_2<K\eta$.  Since $\|b-E_{N'\cap M}(b)\|_2\leq \|b-c\|_2<K\eta$ and $E_{N'\cap M}$ is operator norm-contractive, it follows that we may assume that $c\in M_1$ as well.  (See \cite[Proposition 1]{CJ} for a proof.)

\begin{thm}\label{T}
Suppose that (*) holds.  Suppose further that $N$ is an embeddable property (T) II$_1$ factor, $M$ is an e.c. embeddable factor containing $N$, and $j:M\hookrightarrow \R^\u$ is downward $\Sigma_2$.  Then $j(N)'\cap \R^\u$ is a factor.
\end{thm}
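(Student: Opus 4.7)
The plan is to derive a contradiction from the existence of $a\in Z(j(N)'\cap \R^\u)\setminus \mathbb{C}$ by transferring a suitable witness for $a$'s defining properties down to $M^\u$ via downward $\Sigma_2$, where Lemma \ref{star} together with property (T) already force the analogous center to be trivial. Concretely, Lemma \ref{star} gives $(N'\cap M)'\cap M=N$, whence $Z(N'\cap M)=(N'\cap M)\cap N=Z(N)=\mathbb{C}$, so $N'\cap M$ is a factor; property (T) of $N$ then yields $N'\cap M^\u=(N'\cap M)^\u$, which is an ultrapower of a II$_1$ factor, so $Z(N'\cap M^\u)=\mathbb{C}$.

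Suppose, toward a contradiction, that $a\in Z(j(N)'\cap \R^\u)\setminus \mathbb{C}$. After replacing $a$ by $a-\tau(a)\cdot 1$ and rescaling, I may assume $\|a\|\le 1$, $\tau(a)=0$, and $\mu:=\|a\|_2>0$. Fix a Kazhdan set $F\subseteq N$ with constant $K$ and set $\delta(\eta):=\eta/(2K)$. The Kazhdan-set reformulation of ``lying in the relative commutant'' is encoded by the $\forall_1$-formula
\[
\Psi_\eta(x;\bar f) \;=\; \sup_c \min\!\Bigl(\|[x,c]\|_2\dminus\eta,\; \delta(\eta)\dminus\max_{f\in F}\|[f,c]\|_2\Bigr),
\]
whose vanishing in any structure says exactly that whenever $c$ satisfies $\max_f\|[f,c]\|_2\le\delta(\eta)$ one has $\|[x,c]\|_2\le\eta$. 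Since $a$ commutes with every element of $j(N)'\cap \R^\u$, property (T) inside $\R^\u$ (approximating such a $c$ by some $c'\in j(N)'\cap \R^\u$ with $\|c-c'\|_2\le K\delta(\eta)$) gives $\Psi_\eta(a;\bar f)^{\R^\u}=0$ for every $\eta>0$. Consequently, for each finite tuple $\vec\eta=(\eta_1,\ldots,\eta_n)$ the sentence
\[
\sigma_{\vec\eta} \;=\; \inf_b \max\!\Bigl(\max_i \Psi_{\eta_i}(b;\bar f),\; |\tau(b)|,\; \mu\dminus\|b\|_2,\; \max_{f\in F}\|[f,b]\|_2\Bigr)
\]
is $\Sigma_2$ with parameters in $j(F)\subseteq j(M)$ and evaluates to $0$ in $\R^\u$ via the witness $b=a$; by downward $\Sigma_2$ of $j$, it evaluates to $0$ in $M$ as well.

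Hence the countable partial type collecting $\Psi_\eta(x;\bar f)=0$ (for $\eta$ ranging over a countable dense subset of $(0,\infty)$), $\tau(x)=0$, $\|x\|_2\ge\mu$, and $\|[f,x]\|_2=0$ for $f\in F$, is finitely satisfied in $M$, hence in $M^\u$ via the elementary diagonal embedding, and so by countable saturation is realized by some $b\in M^\u$. The conditions $\|[f,b]\|_2=0$ for all $f\in F$ together with property (T) of $N$ inside $M^\u$ put $b\in N'\cap M^\u$; and for any $c\in(N'\cap M^\u)_1$ the vanishing of $\max_f\|[f,c]\|_2$ combined with $\Psi_\eta(b;\bar f)=0$ for arbitrarily small $\eta$ forces $[b,c]=0$. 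Therefore $b\in Z(N'\cap M^\u)=\mathbb{C}$, so $b=\lambda\cdot 1$, and then $\tau(b)=\lambda=0$ forces $b=0$, contradicting $\|b\|_2\ge\mu>0$. The main technical point is the identification $N'\cap M^\u=(N'\cap M)^\u$ under property (T), which is standard but should be verified within our embeddable category via the Kazhdan-set/ultrafilter argument; a secondary point is checking that the $\min/\dminus$ encoding of the Kazhdan implication captures exactly what is needed of $a$.
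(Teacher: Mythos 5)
Your proof is correct and shares the paper's high-level strategy (Kazhdan set, Lemma \ref{star}, downward $\Sigma_2$ transfer), but the execution in the second half differs genuinely. The paper packages the implication ``close to commuting with the Kazhdan set $\Rightarrow$ close to commuting with $a$'' into a \emph{single} universal formula $\psi(x,\vec t)$ via a continuity modulus (\cite[Proposition 7.14]{mtfms}), transfers one $\exists_2$ sentence to $M$, takes an approximate witness $b\in M_1$, Kazhdan-perturbs to $b'\in N'\cap M$, and then uses $(N'\cap M)'\cap M=N$ in a direct $\epsilon$-$\delta$ estimate to contradict $d(b,\tr(b)\cdot 1)\approx\epsilon$. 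You instead encode the implication by a countable family of $\forall_1$-formulae $\Psi_\eta$, transfer each resulting $\Sigma_2$ sentence, realize the countable type exactly in $M^\u$ by saturation, and conclude that the realizer lies in $Z(N'\cap M^\u)$; you then derive $Z(N'\cap M^\u)=\mathbb{C}$ by combining Lemma \ref{star} with the property (T) identity $N'\cap M^\u=(N'\cap M)^\u$. Both arguments work; yours trades the paper's $\epsilon$-$\delta$ bookkeeping for an extra (standard) appeal to the property (T) relative-commutant ultrapower identity. Two small points to tidy: the Kazhdan constant is only guaranteed for sufficiently small $\eta$, so the parameter range for $\Psi_\eta$ should be restricted to $(0,\eta_0)$ rather than $(0,\infty)$; and one need not assert that $N'\cap M$ is type II$_1$ --- it suffices that it is a factor (possibly finite-dimensional), which already gives $Z\bigl((N'\cap M)^\u\bigr)=\mathbb{C}$. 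Alternatively, you could sidestep the ultrapower identity entirely by noting that $M^\u$ is itself an e.c.\ embeddable factor and applying Lemma \ref{star} directly to $N\subseteq M^\u$.
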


\begin{proof}
Suppose, towards a contradiction, that $a\in Z(j(N)'\cap \R^\u)$ but $d(a,\tr(a)\cdot 1)=\epsilon>0$.  Without loss of generality, suppose $a$ is in the unit ball.  Let $\{z_1,\ldots,z_n\}$ be a Kazhdan set for $N$ with Kazhdan constant $K$.  Note that
$$\R^\u\models \forall w\left(\max_{1\leq i\leq n}\|[w,j(z_i)]\|_2=0\rightarrow \|[w,a]\|_2=0\right),
$$ whence, by \cite[Proposition 7.14]{mtfms}, there is a continuous, nondecreasing function $\alpha:\mathbb{R}\to \mathbb R$ satisfying $\alpha(0)=0$ such that
$$\R^\u\models \sup_w\left(\|[a,w]\|_2\dotminus \alpha\left(\max_{1\leq i\leq n}\|[w,j(z_i)]\|_2\right)\right)=0.$$ Set $\psi(x,\vec t):=\sup_w(\|[x,w]\|_2\dotminus \alpha(\max_{1\leq i\leq n}\|[w,t_i]\|_2))$, a universal formula such that $\R^\u\models \psi(a,j(\vec z))=0$ whence
$$\R^\u\models \inf_x\max\left(\max_{1\leq i\leq n}\|[x,j(z_i)]\|_2, \psi(x,j(\vec z)),\epsilon\dotminus d(x,tr(x)\cdot 1)\right)=0.$$ Since the latter displayed formula is equivalent to a $\exists_2$-formula, by assumption we have $$M\models \inf_x\max\left(\max_{1\leq i\leq n}\|[x,z_i]\|_2, \psi(x,\vec z),\epsilon\dotminus d(x,tr(x)\cdot 1)\right)=0.$$  Fix $\eta>0$ sufficiently small and take $b\in M_1$ such that
$$M\models \max\left(\max_{1\leq i\leq n}\|[b,z_i]\|_2, \psi(b,\vec z),\epsilon\dotminus d(b,tr(b)\cdot 1)\right)<\eta.$$  If $\eta$ is sufficiently small, there is $b'\in N'\cap M$ such that $d(b,b')<K\eta$.  For simplicity, set $\beta:=K\eta$.  Now suppose that $c\in N'\cap M$ is in the unit ball.  Then $\|[b,c]\|_2<\eta$, whence $\|[b',c]\|_2<\eta+2\beta$.  Since $c\in N'\cap M$ was arbitrary, we have $d(b',(N'\cap M)'\cap M)\leq \eta+2\beta$.\footnote{This follows from the general fact that, for a subfactor $P$ of a II$_1$ factor $Q$ and $a\in Q_1$, one has $d(a,P'\cap Q)\leq \sup_{b\in P_1}\|[a,b]\|_2$.}  By Lemma \ref{star}, since $M$ is e.c. and $N$ has w-spectral gap in $M$, we have that $(N'\cap M)'\cap M=N$, so $d(b',N)\leq \eta+2\beta$, that is, $d(b',E_N(b'))\leq \eta+2\beta$.  However, $b'\in N'\cap M$ implies $E_N(b')\in Z(N)=\mathbb C$.  It follows that $d(b',\tr(b')\cdot 1)=d(b,\mathbb C)\leq d(b,E_N(b'))\leq \eta+2\beta$.  Since $\epsilon\dotminus d(b,\tr(b)\cdot 1)<\eta$, we have that $\epsilon\dotminus d(b',\tr(b')\cdot 1)<\eta+2d(b,b')<\eta+2\beta$, which is a contradiction as long as $2\eta+4\beta<\epsilon$.  Recalling that $\beta=K\eta$, we have that $2\eta+4\beta=(2+4K)\eta$, whence choosing $\eta<\frac{\epsilon}{2+4K}$, we arrive at the desired contradiction.
% Take $b''\in N$ with $d(b',b'')\leq \eta+2\beta$.  Thus, $d(b',b'')\leq \eta+3\beta$.  Now recall also that $\epsilon\dotminus d(b,\tr(b)\cdot 1)<\eta$; since $d(b,b'')<\eta+3\beta$, we have that $\epsilon\dotminus d(b'',\tr(b'')\cdot 1)<3\eta+6\beta$.  However, since $\|[w,b]\|_2<\eta$, we have that $\|[w,b'']\|_2<2\eta+3\beta$, so if $2\eta+3\beta$ is small enough, then each $\|[z_i,b'']\|_2$ is small enough so that there is $b'''\in Z(N)=\mathbb C$ with $d(b'',b''')<\frac{\epsilon}{2}$.  Thus, $d(b'',\tr(b'')\cdot 1)<\frac{\epsilon}{2}$.  If $3\eta+6\beta<\frac{\epsilon}{2}$, then this gives a contradiction.
\end{proof}

The following is a more precise version of Theorem D; it follows immediately from Proposition \ref{fragmentsandembeddings} and Theorem \ref{T}.

\begin{cor}\label{preciseD}
Suppose that (*) holds and every embeddable factor $N$ embeds into an e.c. embeddable factor $M$ such that $M\models \Th_{\forall_3}(\R)$.  Then every embeddable property (T) factor satisfies the FCEP.
\end{cor}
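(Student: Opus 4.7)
The plan is to apply Theorem \ref{T} directly to an embedding produced via Proposition \ref{fragmentsandembeddings}. Let $N$ be an embeddable property (T) factor. By the hypothesis of the corollary, $N$ embeds into an e.c.\ embeddable factor $M$ with $M \models \Th_{\forall_3}(\R)$. If I can produce a downward $\Sigma_2$ embedding $j: M \hookrightarrow \R^\u$, then Theorem \ref{T} yields that $j(N)' \cap \R^\u$ is a factor, and the restriction $j|_N$ witnesses the FCEP for $N$.

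The crux is thus producing such a $j$. Applying Proposition \ref{fragmentsandembeddings} with $m = 0$ (and with the roles of the two structures swapped relative to its statement, so that the ambient ultrapower is $\R^\u$ and the embedded structure is $M$), the existence of a downward $\exists_2$ embedding $M \hookrightarrow \R^\u$ is equivalent to $\R \models \Th_{\exists_3}(M)$. So the one remaining matter is to translate the hypothesis $M \models \Th_{\forall_3}(\R)$ into $\R \models \Th_{\exists_3}(M)$.

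This translation is a routine duality that exploits the fact that $\dotminus$ swaps $\sup$ and $\inf$ (one has $r \dotminus \sup_y \psi = \inf_y (r \dotminus \psi)$ and dually). Specifically, if $\tau$ is a nonnegative $\exists_3$-sentence with $\tau^M = 0$ but $\tau^\R \geq r > 0$, then $r \dotminus \tau$ is equivalent to a nonnegative $\forall_3$-sentence that evaluates to $0$ in $\R$ and to something positive in $M$, contradicting the hypothesis. Thus $\R \models \Th_{\exists_3}(M)$, Proposition \ref{fragmentsandembeddings} supplies the desired $j$, and Theorem \ref{T} finishes the proof. I foresee no substantive obstacle: all the real content lies in Theorem \ref{T} and Proposition \ref{fragmentsandembeddings}, and the present argument is simply a matter of aligning the quantifier fragments correctly.
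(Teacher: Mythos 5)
Your proof is correct and follows exactly the route the paper intends (the paper simply asserts that the corollary ``follows immediately from Proposition \ref{fragmentsandembeddings} and Theorem \ref{T}''). You correctly identify the only nontrivial step, namely the duality converting the hypothesis $M\models \Th_{\forall_3}(\R)$ into $\R\models \Th_{\exists_3}(M)$ so that Proposition \ref{fragmentsandembeddings} (with $m=0$ and the roles of the two structures swapped) supplies a downward $\exists_2$ embedding $j:M\hookrightarrow\R^\u$, after which Theorem \ref{T} yields that $j(N)'\cap\R^\u$ is a factor.
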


The assumption in the previous corollary should be compared to:

\begin{lem}
If $M$ is an e.c. embeddable factor, then $M\models \operatorname{Th}_{\exists_3}(\R)$.
\end{lem}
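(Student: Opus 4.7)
The plan is as follows. Fix a nonnegative $\exists_3$-sentence $\sigma=\inf_x\sup_y\inf_z\psi(x,y,z)$ (with $\psi$ quantifier-free) such that $\sigma^\R=0$, and fix $\varepsilon>0$; I aim to show $\sigma^M\leq\varepsilon$, from which $\sigma^M=0$ follows on letting $\varepsilon\to 0$. The main idea is to transport an $\R$-witness for $\sigma$ through the two natural copies of $\R$ sitting inside $\R^\u$.

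First I would pick $x_0\in\R$ with $\sup_y\inf_z\psi(x_0,y,z)^\R<\varepsilon$ and lift the bound to $\sup_{y\in\R^\u}\inf_{z\in\R^\u}\psi(x_0,y,z)^{\R^\u}<\varepsilon$ using the elementary inclusion $\R\preceq\R^\u$. Since $M$ is a II$_1$ factor, fix a hyperfinite subfactor inclusion $\iota_0:\R\hookrightarrow M$, and since $M$ is embeddable, fix an embedding $j:M\hookrightarrow\R^\u$; put $x_0':=\iota_0(x_0)\in M$. The point is that $\R^\u$ now contains two embeddings of $\R$: the diagonal one (through which $x_0$ is seen) and the composition $j\circ\iota_0$ (through which $j(x_0')$ is seen).

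The critical step invokes Jung's uniqueness theorem to produce a unitary $u\in\R^\u$ with $uau^*=j(\iota_0(a))$ for every $a\in\R$. Fix now $y\in M$; applying the bound from the first paragraph to the element $u^*j(y)u\in\R^\u$, substituting $z=u^*z''u$ in the inner infimum, and using that $\psi$ (being built from $\ast$-polynomials and the trace) is invariant under simultaneous conjugation by a unitary, I would rewrite the resulting inequality as $\inf_{z\in\R^\u}\psi(j(x_0'),j(y),z)^{\R^\u}<\varepsilon$. Since $(x_0',y)\in M$, the existential closedness of $M$ -- suitably extended from separable embeddable extensions to the non-separable $\R^\u$ by passing to a separable embeddable intermediate subfactor -- then gives $\inf_{z\in M}\psi(x_0',y,z)^M=\inf_{z\in\R^\u}\psi(j(x_0'),j(y),z)^{\R^\u}<\varepsilon$. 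As $y\in M$ was arbitrary, $\sigma^M\leq\varepsilon$.

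The main technical ingredient is Jung's theorem: without uniqueness-up-to-unitary-conjugacy for embeddings $\R\hookrightarrow\R^\u$, there seems to be no way to relate the $\R$-bound witnessed by the diagonal element $x_0$ to any element of the particular copy $j(\iota_0(\R))\subseteq\R^\u$ through which $M$ sees $\R$. A secondary (and routine) point is the extension of the e.c.\ property to the non-separable factor $\R^\u$: for fixed parameters $\bar a\in M$ and quantifier-free $\psi$, any infimum $\inf_z\psi(\bar a,z)^{\R^\u}$ is witnessed (to within any $\varepsilon'>0$) by a single element of $\R^\u$, which together with $\bar a$ can be absorbed into a separable embeddable subfactor of $\R^\u$.
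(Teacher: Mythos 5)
Your proof is correct and takes essentially the same route as the paper's, which is terser: fix a witness $a\in\R\subseteq M$ for the outer $\inf$, transport the $\R$-bound to $\R^\u$ along the embedding of $M$, and pull the inner $\inf$-witness back to $M$ by existential closedness. You are right to flag Jung's theorem explicitly --- the paper invokes it silently at the step from $(\sup_y\inf_z\varphi(a,y,z))^\R<\epsilon$ to $(\inf_z\varphi(i(a),i(b),z))^{\R^\u}<\epsilon$, which needs every embedding $\R\hookrightarrow\R^\u$ to be elementary --- and likewise the separability reduction needed to apply existential closedness against $\R^\u$.
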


\begin{proof}
Since $M$ is a II$_1$ factor, we may assume that $\R\subseteq M$.  Fix an $\exists_3$-sentence $\sigma=\inf_x\sup_y\inf_z \varphi(x,y,z)$ such that $\sigma^\R=0$.  Fix $\epsilon>0$ and $a\in \R$ such that $(\inf_y\sup_z\varphi(a,y,z))^\R<\epsilon$.  Fix $b\in M$ and an embedding $i:M\hookrightarrow \R^\u$.  Then $(\inf_z\varphi(i(a),i(b),z)^{\R^\u}<\epsilon$, whence there is $c\in \R^\u$ such that $(\varphi(i(a),i(b),c)^{\R^\u}<\epsilon$.  Since $M$ is e.c. there is $b'\in M$ such that $\varphi(a,b,c')<2\epsilon$.  Since $\epsilon$ is arbitrary, we have that $\sigma^M=0$.
\end{proof}

Thus, the assumption of Corollary \ref{preciseD} comes tantalizingly close to removing any model-theoretic assumption at all, leaving only the operator-algebraic assumption (*).

% This is tantalizingly close since $M\models \operatorname{Th}_{\exists_3}(\R)$ for any e.c. embeddable factor $M$.  Also this works for any e.c. embeddable $M$ and we don't have to worry about trying to prove something about infintiely generic embeddable factors.

\section{Proof of Theorem C}

We begin by explaining exactly what we mean for two structures to be $k$-elementarily equivalent.
\begin{defn}
If $\varphi$ is a formula and $k$ is a nonnegative integer, we recall what it means for $\varphi$ to have \textbf{quantifier depth at most $k$}, written $\depth(\varphi)\leq k$, by induction on the complexity of $\varphi$:
\begin{itemize}
\item If $\varphi$ is atomic, then $\depth(\varphi)\leq 0$.
\item If $\varphi_1,\ldots,\varphi_n$ are formulae, $f:\mathbb R^n\to \mathbb R$ is a continuous function and $\varphi=f(\varphi_1,\ldots,\varphi_n)$, then $\depth(\varphi)\leq\max_{1\leq i\leq n}\depth(\varphi_i)$.
\item If $\varphi=\sup_{\vec x} \psi$ or $\varphi=\inf_{\vec x} \psi$, then $\depth(\varphi)\leq \depth(\psi)+1$.
\end{itemize}
\end{defn}

\begin{defn}
If $M$ and $N$ are $L$-structures, we write $M\equiv_k N$ if $\sigma^M=\sigma^N$ whenever $\operatorname{depth}(\sigma)\leq k$.
\end{defn}

\begin{remark}
If $\sigma$ is an $\forall_m$-sentence or a $\exists_m$-sentence, then clearly $\depth(\sigma)=m$.  Consequently, if $M\equiv_m N$, then $M\models \Th_{\forall_m}(N)$ and $N\models \Th_{\forall_m}(M)$.
\end{remark}

We recall the following Ehrenfeucht-Fraisse game for continuous logic.
\begin{defn}
Let $M$ and $N$ be $L$-structures and let $k\in \mathbb N$.  $\mathfrak G(M,N,k)$ denotes the following game played by two players.  First, player I plays either a tuple\footnote{Here, tuples can be either of finite or countably infinite length.} $\vec{x_1}\in M$ or a tuple $\vec{y_1}\in N$.  Player II then responds with a tuple $\vec{y_1}\in N$ or $\vec{x_1}\in M$.  The play continues in this way for $k$ rounds.  We say that \emph{Player II wins $\mathfrak G(M,N,k)$} if there is an isomorphism between the substructures generated by $\{\vec{x_1},\ldots,\vec{x_k}\}$ and $\{\vec{y_1},\ldots,\vec{y_k}\}$ that maps $\vec{x_i}$ to $\vec{y_i}$.  
\end{defn}

\begin{defn}
If $M$ and $N$ are $L$-structures, we write $M\equiv_k^{EF} N$ if II has a winning strategy for $\mathfrak G(M,N,k)$. 
\end{defn}

It is a routine induction to show that $M\equiv_k^{EF} N$ implies $M\equiv_k N$.  Conversely, one has the following result (see \cite[Lemma 2.4]{gamespaper}):  

\begin{fact}
Suppose that $M$ and $N$ are countably saturated $L$-structures.  Then $M\equiv_k N$ if and only if $M\equiv_k^{EF} N$.
\end{fact}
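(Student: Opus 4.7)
The plan is to prove the forward implication $M \equiv_k^{EF} N \Rightarrow M \equiv_k N$ by a routine induction on $k$: for a sentence $\sigma$ of depth $\leq k$, play a single round of $\mathfrak{G}(M, N, k)$ and apply the inductive hypothesis to the one-round expansions. No saturation is needed for this direction. The substantial content lies in the reverse direction, which I would prove via a back-and-forth argument where each round is powered by countable saturation.

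For the reverse direction $M \equiv_k N \Rightarrow M \equiv_k^{EF} N$, I would construct a winning strategy for Player II that maintains the invariant, after round $j$, that for the tuples $\vec{x_1}, \ldots, \vec{x_j}$ played in $M$ and $\vec{y_1}, \ldots, \vec{y_j}$ played in $N$,
$$\varphi^M(\vec{x_1}, \ldots, \vec{x_j}) = \varphi^N(\vec{y_1}, \ldots, \vec{y_j}) \quad \text{for every formula } \varphi \text{ with } \depth(\varphi) \leq k - j.$$
The $j = 0$ case is exactly the hypothesis. At round $j+1$, suppose (without loss of generality) Player I plays $\vec{x_{j+1}} \in M$. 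The goal is to find $\vec{y_{j+1}} \in N$ realizing the partial type
$$p(\vec{z}) := \{\varphi(\vec{y_1}, \ldots, \vec{y_j}, \vec{z}) = \varphi^M(\vec{x_1}, \ldots, \vec{x_{j+1}}) \ : \ \depth(\varphi) \leq k - j - 1\}.$$
To verify approximate finite satisfiability of $p$ in $N$ over $(\vec{y_1}, \ldots, \vec{y_j})$, I would take finitely many such conditions involving $\varphi_1, \ldots, \varphi_\ell$ with $r_i := \varphi_i^M(\vec{x_1}, \ldots, \vec{x_{j+1}})$, and consider
$$\Psi(\vec{z_1}, \ldots, \vec{z_j}) := \inf_{\vec{w}} \max_{1 \leq i \leq \ell} |\varphi_i(\vec{z_1}, \ldots, \vec{z_j}, \vec{w}) - r_i|,$$
a formula of depth at most $k - j$ satisfying $\Psi^M(\vec{x_1}, \ldots, \vec{x_j}) = 0$ (witnessed by $\vec{w} = \vec{x_{j+1}}$). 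By the invariant at round $j$, $\Psi^N(\vec{y_1}, \ldots, \vec{y_j}) = 0$ as well, producing an approximate witness in $N$. Since $p$ is a set of countably many conditions over a separable parameter set, countable saturation of $N$ realizes $p$, yielding a $\vec{y_{j+1}}$ preserving the invariant.

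After $k$ rounds, the invariant at depth $0$ says that all quantifier-free formulas, including the metric $d$, take matching values on $(\vec{x_1}, \ldots, \vec{x_k})$ and $(\vec{y_1}, \ldots, \vec{y_k})$, so the assignment $\vec{x_i} \mapsto \vec{y_i}$ is isometric on the generated term-algebras and extends to the desired isomorphism between the closed substructures, winning the game for Player II. The main obstacle is the depth bookkeeping for $\Psi$: one must invoke the conventions that continuous combinations such as $|\cdot - r_i|$ and $\max$ do not increase quantifier depth, while an $\inf$ over a (possibly infinite) tuple adds just $1$ to the depth, so as to conclude $\depth(\Psi) \leq k - j$. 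A secondary point is that $p$ remains countable even when the played tuples have countably infinite length, since each formula uses only finitely many variables.
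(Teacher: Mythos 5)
The paper does not actually prove this Fact --- it cites it as \cite[Lemma 2.4]{gamespaper} --- so I assess your argument on its own. Your back-and-forth with the depth-countdown invariant (after round $j$, all formulas of depth $\leq k-j$ agree on the played tuples) is the standard proof, and the depth bookkeeping is exactly right: continuous connectives preserve depth, a single quantifier block over a (possibly infinite) tuple raises it by $1$, and each individual formula only involves finitely many variables, so the witnessing formula $\Psi$ indeed has depth $\leq k-j$ and the invariant at round $j$ forces $\Psi^N=0$.

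One claim needs repair: that ``$p$ is a set of countably many conditions.'' In continuous logic the set of formulas of a fixed depth is uncountable, since arbitrary continuous functions are admitted as connectives, so the partial type $p$ as you wrote it is not countable. The standard fix is to restrict to a countable subfamily of formulas of depth $\leq k-j-1$ that is uniformly dense among all such formulas (for instance, one generated by a countable set of connectives, such as the rational piecewise-linear ones, applied to the countably many relevant atomic subformulas); realize the resulting countable partial type by countable saturation; and then observe by uniform density that the realization automatically satisfies every condition in the full type $p$, so the invariant is preserved for \emph{all} formulas of the appropriate depth. With that adjustment your argument is complete and matches the cited proof.
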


We are now ready to prove Theorem C.  Recall from the introduction that a II$_1$ factor $M$ has the Brown property if:  for every separable subfactor $N$ of $M^\u$, there is a separable subfactor $P$ of $M^\u$ with $N\subseteq P$ such that $P'\cap M^\u$ is a II$_1$ factor.

\begin{thm}
Suppose that $M\equiv_4 \R$.  Then $M$ has the Brown property.
\end{thm}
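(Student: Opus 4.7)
The plan is to transfer Nate Brown's fact from $\R^\u$ to $M^\u$ by playing a four-round Ehrenfeucht--Fraisse game between the two ultrapowers. Since $M \equiv_4 \R$, the fundamental theorem of ultraproducts yields $M^\u \equiv_4 \R^\u$, and because both ultrapowers are countably saturated, the cited Fact upgrades this to $M^\u \equiv_4^{EF} \R^\u$. Fix a winning strategy $\Sigma$ for Player II in $\mathfrak G(M^\u, \R^\u, 4)$. Throughout, Player I's moves will be chosen adaptively.

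Given a separable subfactor $N \subseteq M^\u$, I would use rounds 1 and 2 to construct $P$. In round 1 Player I plays a countable generating tuple for $N$ in $M^\u$; $\Sigma$ responds with a tuple in $\R^\u$ generating a subfactor $N^* \subseteq \R^\u$ that will be isomorphic to $N$ under the final isomorphism (the factor property is intrinsic to a tracial von Neumann algebra and thus preserved). Apply Brown's fact to $N^*$ to obtain a separable subfactor $P^* \subseteq \R^\u$ with $N^* \subseteq P^*$ and $(P^*)' \cap \R^\u$ a II$_1$ factor. In round 2 Player I plays a countable generating tuple for $P^*$ that extends the round-1 tuple; $\Sigma$ responds with a tuple in $M^\u$ generating a separable subfactor $P \subseteq M^\u$. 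The final isomorphism identifies $P$ with $P^*$ as tracial von Neumann algebras and, since $N^* \subseteq P^*$, forces $N \subseteq P$.

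The crux of the proof is showing $P' \cap M^\u$ is a factor, which uses rounds 3 and 4. Suppose towards a contradiction that $a \in Z(P' \cap M^\u) \setminus \mathbb{C}\cdot 1$. Player I plays $a$ in round 3 in $M^\u$; $\Sigma$ responds with $a^* \in \R^\u$. Since the final isomorphism preserves both commutation with elements of $P$ and distance from $\mathbb{C}\cdot 1$, we have $a^* \in (P^*)' \cap \R^\u$ and $a^* \notin \mathbb{C}\cdot 1$. Because $(P^*)' \cap \R^\u$ is a factor, some $c^* \in (P^*)' \cap \R^\u$ satisfies $\|[a^*, c^*]\|_2 > 0$. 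In round 4 Player I plays $c^*$; $\Sigma$ responds with $c \in M^\u$. The final isomorphism then yields $c \in P' \cap M^\u$ with $\|[a, c]\|_2 = \|[a^*, c^*]\|_2 > 0$, contradicting $a \in Z(P' \cap M^\u)$. That $P' \cap M^\u$ is of type II$_1$ rather than a finite-dimensional factor follows from a standard countable-saturation argument producing a Haar unitary in $P' \cap M^\u$.

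The main obstacle is conceptual rather than technical: the EF isomorphism controls only the finite-parameter substructures generated by the moves, not the relative commutants in the ambient ultrapowers. The decisive idea is to spend rounds 3 and 4 respectively on the hypothetical central element $a$ on the $M^\u$ side and on a witness $c^*$ to non-centrality on the $\R^\u$ side, thereby packaging exactly two quantifier alternations beyond the two rounds used to build $N$ and $P$. This explains precisely why depth $4$ suffices and, conjecturally, why depth $3$ does not.
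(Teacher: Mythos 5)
Your proposal follows essentially the same route as the paper: play the four-round EF game between the countably saturated ultrapowers, spending rounds one and two to build $N^*$ and $P^*$ on the $\R^\u$ side via Brown's fact, round three to move a hypothetical nonscalar central element of $P'\cap M^\u$ over to $\R^\u$, and round four to witness non-centrality there (your proof-by-contradiction phrasing is just the contrapositive of the paper's direct argument that $b_3\in Z((P^*)'\cap \R^\u)=\mathbb C$). The one small imprecision is at the very end: ``a standard countable-saturation argument producing a Haar unitary'' in $P'\cap M^\u$ is not available for an arbitrary II$_1$ factor $M$ --- one needs McDuffness of $M$ (which the paper extracts from $M\equiv_2\R$) to know that the relative commutant of a separable subalgebra of $M^\u$ is infinite-dimensional rather than, say, $\mathbb C$.
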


\begin{proof}
Suppose $N$ is a separable subfactor of $M^\u$.  It suffices to find a separable subfactor $P$ of $M^\u$ containing $N$ such that $P'\cap M^\u$ is a factor.  Indeed, since $M\equiv_2 \R$, $M$ is McDuff, whence $P'\cap M^\u$ will contain a copy of $\R^\u$ and will thus be a II$_1$ factor, as desired.

Since $M\equiv_4 \R$ and $M^\u$ and $\R^\u$ are $\aleph_1$-saturated, we know that player II has a winning strategy in $\mathfrak G(M^\u,\R^\u,4)$.  We assume in the following run of the game that player II plays according to this strategy.  Let player I begin with $\vec a_1$, which is a countable sequence from the unit ball of $N$ which generates $N$.  Let player II respond with $\vec b_1$ and let $N^*$ denote the separable subfactor of $\R^\u$ generated by $\vec b_1$.  Since $\R$ has the Brown property, there is a separable subfactor $P^*$ of $\R^\u$ containing $N^*$ such that $(N^*)'\cap \R^\u$ is a factor.  Let $\vec b_2$ be a countable subset of the unit ball of $P^*$ which, together with $\vec b_1$, generates $P^*$.  Let player II respond with $\vec a_2$ and let $P$ be the separable subfactor of $M^\u$ generated by $\vec a_1$ and $\vec a_2$.  We claim that this $P$ is as desired.

To see this, suppose that $a_3\in Z(P'\cap M^\u)$.  We wish to show that $a_3\in \mathbb C$.  To see this, let player II respond with $b_3\in \R^\u$.  We claim that $b_3\in Z((P^*)'\cap \R^\u)$, whence $b_3\in \mathbb C$.  To see this, suppose that $b_4\in (P^*)'\cap \R^\u$.  Let player II respond with $a_4\in M^\u$.  Since the map $\vec a_1\vec a_2 a_3 a_4\mapsto \vec b_1\vec b_2 b_3 b_4$ extends to an isomorphism between the subalgebras they generate, we see that $a_4\in P'\cap M^\u$.  It follows that $a_3$ and $a_4$ commute, whence so do $b_3$ and $b_4$.

Now that we have established that $b_3\in \mathbb C$, the fact that the strategy is winning also shows that $a_3\in \mathbb C$, as desired.
\end{proof}

Recall that a McDuff II$_1$ factor is \textbf{super McDuff} if $M'\cap M^\u$ is a II$_1$ factor.  In \cite[Proposition 4.2.4]{jung}, it was proven that $M$ has the Brown property if and only if all $N$ elementarily equivalent to $M$ are super McDuff.  Consequently, we arrive at:

\begin{cor}
If $M\equiv_4 \R$, then $M$ is super McDuff.
\end{cor}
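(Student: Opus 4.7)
The plan is to read off the corollary essentially as a formal consequence of Theorem C together with the cited equivalence \cite[Proposition 4.2.4]{jung}. Given the hypothesis $M \equiv_4 \R$, the previous theorem immediately supplies that $M$ has the Brown property. I would then invoke the quoted characterization of the Brown property: $M$ has the Brown property precisely when every II$_1$ factor $N$ with $N \equiv M$ is super McDuff.

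The one point worth spelling out is that we are entitled to apply this characterization to $M$ itself. Elementary equivalence is reflexive ($M \equiv M$ trivially, since $\sigma^M = \sigma^M$ for every sentence $\sigma$), so $M$ lies in the family of factors to which the proposition applies. This forces $M' \cap M^\u$ to be a II$_1$ factor, i.e., $M$ is super McDuff.

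I do not anticipate any real obstacle here; the substantive work has already been done in Theorem C and in the quoted proposition from \cite{jung}. The only thing to be careful about is not misreading the statement of \cite[Proposition 4.2.4]{jung} as giving super McDuffness only for the proper elementary class around $M$ minus $M$ itself -- but reflexivity of $\equiv$ removes even that worry. The proof can therefore be written in one or two lines, simply chaining the two inputs.
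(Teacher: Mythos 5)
Your argument is correct and matches the paper's intent exactly: the corollary is a one-line consequence of Theorem C together with \cite[Proposition 4.2.4]{jung}, using the trivial fact that $M\equiv M$. The paper presents it just as you do, with "Consequently, we arrive at," so there is nothing to add.
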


As mentioned in the introduction, if $\Th(\R)$ does not admit quantifier simplification, then these results yield continuum many new examples of separable factors that are super McDuff and have the Brown property.


\begin{thebibliography}{99}
\bibitem{jung} S. Atkinson, I. Goldbring, and S. Kunnawalkam Elayavalli, \textit{Factorial commutants and the generalized Jung property for II$_1$ factors}, preprint.  arXiv 2004.02293.
\bibitem{mtfms} I. Ben Yaacov, A. Berenstein, C. W. Henson, and A. Usvyatsov, \textit{Model theory for metric structures}, in Model theory with applications to algebra and analysis. Vol. 2, volume 350 of London Math. Soc. Lecture Note Ser., 315–427. Cambridge Univ. Press, Cambridge, 2008.
\bibitem{brown} N. Brown, \textit{Topological dynamical systems associated to II$_1$ factors}, Adv. Math., \textbf{227} (2011), 1665-1699. With an appendix by N. Ozawa.
\bibitem{CJ} A. Connes and V. Jones, \textit{Property T for von Neumann algebras}, Bulletin of the London Mathematical Society \textbf{17} (1985), 57-62.
\bibitem{ecfactor} I. Farah, I. Goldbring, B. Hart, and D. Sherman, \textit{Existentially closed II$_1$ factors}, Fundamenta Mathematicae \textbf{233} (2016), 173-196. 
\bibitem{MTOA3} I. Farah, B. Hart, and D. Sherman, \textit{Model theory of operator algebras III:  Elementary equivalence and II$_1$ factors}, Bull.
London Math. Soc. \textbf{46} (2014), 1-20.
\bibitem{Popa} I. Goldbring, \textit{On Popa's factorial commutant embedding problem}, to appear in the Proceedings of the AMS.
\bibitem{spectralgap} I. Goldbring, \textit{Spectral gap and definability}, to appear in the book Beyond First Order Model Theory Volume 2.
\bibitem{universaltheory} I. Goldbring and B. Hart, \textit{The universal theory of the hyperfinite II$_1$ factor is not computable}, preprint.  arXiv 2004.02299.
\bibitem{gamespaper} I. Goldbring and B. Hart, \textit{On the theories of McDuff's II$_1$ factors}, International Mathematics Research Notices \textbf{27} (2017), 5609-5628.

 
\end{thebibliography}
\end{document}